\documentclass[12pt, oneside,reqno]{amsart}
\usepackage{hyperref}
\usepackage{geometry}
\usepackage{amsmath}
\usepackage{cite}
\usepackage{xcolor}
\usepackage{graphicx}
\usepackage{subfigure}

\newcommand{\rn}{\mathbb R^n}
\newcommand{\sn}{S^{n-1}}
\newcommand{\kno}{\mathcal K^n_o}
\newcommand{\kne}{\mathcal K^n_e}

\newcommand\wtilde[1]{\overset{\lower.4ex\hbox{$\scriptstyle \sim$}}{#1}}

\numberwithin{equation}{section}

\newtheorem{thm}{Theorem}[section]

\newtheorem{lem}[thm]{Lemma}
\newtheorem{pro}[thm]{Problem}

\begin{document}
	\title[On the dual quermassintegral]{$L_p$ Minkowski problem and  Brunn-Minkowski inequality for dual quermassintegrals}

	\author[X. Chen]{Xiaojuan Chen}
	\address{Institute of Mathematics,
		Hunan University,  Changsha,  410082,  China}
	\email{
		cxj@hnu.edu.cn}
	
	\author[S. Tang]{Shengyu Tang}
	\address{Institute of Mathematics,
		Hunan University,  Changsha,  410082,  China}
	\email{
		tsy@hnu.edu.cn}
	
	\author[S. Wang]{Sinan Wang}
	\address{Institute of Mathematics,
		Hunan University,  Changsha,  410082,  China}
	\email{
		wangsinan@hnu.edu.cn}
	\subjclass{52A38, 35J60}

	\keywords{Dual quermassintegrals, Brunn-Minkowski inequality, $L_p$ dual Minkowski problem}
	
	\thanks{Research of Chen was supported by the Postdoctoral Fellowship Program of CPSF under Grant Number GZB20250702.}
	
	\begin{abstract}
		This paper studies the core problems in the $L_p$ dual Brunn-Minkowski theory, encompassing the $L_p$ Minkowski problem and $L_p$ Brunn-Minkowski inequality for dual quermassintegrals. For the case $0<p<q\leq n$, we establish $C^0$ estimates for the $L_p$ dual Minkowski problem without symmetric assumptions, thereby resolving a related problem proposed by B\"or\"oczky-Chen-Liu-Saroglou in the smooth sense. We further prove the uniqueness of smooth solutions under appropriate conditions, provided the density function is sufficiently close to a constant in the H\"older norm. 
		Finally, exploiting the fact that the uniqueness of the Minkowski type problem is equivalent to the validity of the Brunn-Minkowski inequality in a certain sense, we study the $L_p$ Brunn-Minkowski inequality for dual quermassintegrals for origin-symmetric convex bodies with $p<q$.
	\end{abstract}
	\maketitle	
	
	\section{Introduction}
	Convex geometric analysis, the analytically oriented branch of convex geometry, is built in part on the classical Brunn-Minkowski (or mixed volume) theory developed by Minkowski, Alexandrov, and other pioneers. In this classical framework, the quermassintegrals are core geometric invariants, which correspond to integrals of mean curvature for closed convex hypersurfaces in differential geometry, as well as integrals of projection areas in integral geometry.
	Viewed as functionals on the space of convex bodies in $\rn$, the derivatives of the quermassintegrals yield surface area measures, whose characterization is a foundational problem in the field, epitomized by the Minkowski problem that links convex geometry to fully nonlinear partial differential equations. Here, tools such as the cosine transform (a variant of the Fourier transform) and elliptic partial differential equations on the unit sphere play crucial roles in the study of projections and surface area measures. Motivated by research on convex body sections in integral geometry, an analogous theory for mixed volumes was introduced in the 1970s by Lutwak \cite{L75}, which revealed a striking duality in convex geometry and thus became known as the dual mixed volume theory, or the dual Brunn-Minkowski theory.
	The central invariants of this dual theory are the dual quermassintegrals, special dual mixed volumes where volume serves as both the classical and dual quermassintegrals.
	
	\subsection{Minkowski problem}
	The classical Minkowski problem is essentially the characterization of surface area measures, which was first studied by Minkowski for polytopes in 1897. In 1962, Firey \cite{F62} generalized the Minkowski sum to $p$-sum. Later, Lutwak \cite{L93} developed the corresponding $L_p$ Brunn-Minkowski theory based on Firey's $p$-sum, and proposed $L_p$ surface area measures, then utilized Aleksandrov's variational theory to solve the corresponding $L_p$ Minkowski problem.
	The $L_p$ Minkowski problem encompassed many important issues. When $p = 1$, it corresponds to the classical Minkowski problem \cite{A38, M97}; when $p = 0$, it is called the logarithmic Minkowski problem \cite{BLYZ13, Z14}; when $p =-n$, it is known as the centro- affine Minkowski problem \cite{CW06, Z15}.
	
	The above Minkowski problems were proposed regarding the surface area measure. In convex geometry, there is another important measure, namely the curvature measure. The most typical curvature measure is the Aleksandrov integral curvature measure. Aleksandrov \cite{A42} obtained the existence of the corresponding solution to the Aleksandrov problem. In 2016, a breakthrough was made when Huang-Lutwak-Yang-Zhang \cite{HLYZ16} discovered the $q$-th dual curvature measure $\tilde{C}_q(K,\cdot)$, which can be
	viewed as differentials of dual quermassintegrals, and derived the related variational formulas. Then they proposed and solved the corresponding dual Minkowski problem. 
	Subsequently, the dual Brunn-Minkowski theory flourished and achieved a series of significant results, see \cite{BLYZZ19, HP18, H25, Hu25, LSW20, Z17, Z18} and the references therein. 
	
	Lutwak-Yang-Zhang \cite{LYZ18} further integrated the dual Brunn-Minkowski theory into the $L_p$ Brunn-Minkowski theory, and introduced the $L_p$ dual curvature measure $\tilde{C}_{p,q}(K,\cdot)$ to unify the $L_p$ surface area measure and the $q$-th dual curvature measure. The associated Minkowski problem was called the $L_p$ dual Minkowski problem. Subsequently, many researchers adopted variational methods to derive solutions with variational structure:
	B\"{o}r\"{o}czky-Fodor \cite{BF19} obtained the existence when $p>1, q>0, p\neq q$; Chen-Chen-Li \cite{CCL21} studied the existence and non-uniqueness of the solution under even assumptions when $q>0, p<0$ in some certain assumption; Li-Liu-Lu \cite{LLL22} constructed counterexamples to demonstrate the nonuniqueness when $p<0<q$; Mui \cite{M24} studied the existence of the solution under even assumptions when $-1<p<0, q<1+p, p\neq q$, and was further extended by B\"{o}r\"{o}czky-Kov\'{a}cs-Mui-Zhang \cite{BKMZ25} under group symmetric assumptions. After that, Shan \cite{S25} further established the existence of solutions from an algebraic perspective for group invariant measures and group invariant convex bodies when $p,q\in \mathbb{R}$.
	
	When the $L_p$ dual curvature measure has a density $\frac{d\tilde{C}_{p,q}(K,\cdot)}{d\mathcal{H}^{n-1}}=f$ with respect to spherical Lebesgue measure $\mathcal{H}^{n-1}$, then this problem is equivalent to solving the following Monge‑Amp\`ere equation
	\begin{equation}
		\label{MA equation in introduction}
		h^{1-p}(h^2+|\nabla h|^2)^{\frac{q-n}{2}}\det(\nabla^2h+hI)=f\quad \text{on}~\sn,
	\end{equation}
	where $\nabla h$ and $\nabla^2 h$ denote the spherical gradient and the Hessian of the function $h$ on $S^{n-1}$ with respect to a moving orthonormal frame respectively, and $I$ is the identity matrix.
	For equation \eqref{MA equation in introduction}, Huang-Zhao \cite{HZ18} used the continuity method to obtain the existence and the uniqueness of smooth solutions when $p>q$; Chen-Huang-Zhao \cite{CHZ19} used the curvature flow method to obtain the existence of solutions when $pq > 0$ and $f$ is even; Chen-Li \cite{CL21} used the curvature flow method to obtain the existence of solutions when $p>0, q\neq n$ or $q \leq p<0$. 
	For a special case of the uniqueness results when $f\equiv1$, which is called the isotropic case, have also attracted attention of many scholars. In contrast to \cite{HZ18} which solved the case $p>q$ for general $f$, Chen–Huang–Zhao \cite{CHZ19} solved the case $-n\leq p<q\leq\min\{n,n+p\}$ under the assumption of origin-symmetric. Chen-Li \cite{CL21} removed the symmetric assumption for the case $1<p<q\leq n$ or $-n\leq p<q<-1$ or $p=q$ up to rescaling. Hu-Ivaki \cite{HI24} gave the uniqueness for the case $-n<p\leq -1,n\leq q\leq n+1$.
	For the uniqueness of a large generalized class of isotropic curvature problems, see Ivaki-Milman \cite{IM23} and Li-Wan \cite{LW24}. Li-Wan \cite{LW22} and Liu-Lu \cite{LL26} classified the solutions for the planar case.
	
	In this paper, we mainly focus on the uniqueness of the $L_p$ dual Minkowski problem when the density function is near the ball in $C^{\alpha}$ norm sense. The uniqueness of this viewpoint has become a hot research topic recently. Chen-Huang-Li-Liu \cite{CHLL20} established the uniqueness of $L_p$ Minkowski problem when $p\in\left(1-\frac{c}{n^{\frac{3}{2}}},1\right)$ and the density function is even and close to the ball. Chen-Feng-Liu \cite{CFL22} further studied the uniqueness for logarithmic Minkowski problem when $n=3$ without any symmetric assumptions. For higher dimensions, the uniqueness results were independently established by B\"{o}r\"{o}czky-Saroglou \cite{BS24} and Hu-Ivaki \cite{HI25} using different approaches. 
	Hu \cite{Hu25} established the uniqueness for symmetric solutions of the dual Minkowski problem when the density function is even, close to the ball, and $0<q\leq n$ for $1\leq n\leq 3$ or $n-3\leq q\leq n$ for $n>3$. Cabezas-Moreno and Hu \cite{CH25} proved the uniqueness for symmetric solutions of the $L_p$ dual Minkowski problem when $1<p<q\leq n$, which also developed by B\"{o}r\"{o}czky-Chen-Liu-Saroglou \cite{BCLS-1} for $p>-1,q>0,p<q<\min\{n,n+p\}$ with symmetric assumptions and for $p\in (-1,1)$, $q$ is close to $n$ without symmetric assumptions.
	
	It is worth noting that $C^0$ estimates are important to derive existence and uniqueness results for the above Minkowski problems and the corresponding Monge-Amp\`{e}re equations. Recently, 
	B\"{o}r\"{o}czky-Chen-Liu-Saroglou \cite{BCLS25-2} proposed the following problem for the $L_p$ dual Minkowski problem.
	\begin{pro}[BCLS \cite{BCLS25-2}]
		\label{problem}
		For $n\geq2$, $p\in(-1,1),q>n-1$ and $\lambda>1$, does there exist a constant $C=C(n,p,q,\lambda)>1$ such that if
		\begin{equation*}
			\lambda^{-1}\mathcal{H}^{n-1}\leq\tilde{C}_{p,q}(K,\cdot)\leq \lambda\mathcal{H}^{n-1}
		\end{equation*}
		holds on $S^{n-1}$ for a convex body $K\in\kno$, then the support function $h_K$ of $K$ satisfies
		\begin{equation*}
			\max_{x\in\sn}h_{K}(x)\leq C\ \text{and}\ |K|\geq\frac{1}{C}.
		\end{equation*}
	\end{pro}
	In \cite{BCLS25-2}, they solved Problem \ref{problem} for $p\in[0,1),q>p+2$ in $\mathbb{R}^3$.
	They also verified Problem \ref{problem} in \cite{BCLS-1} when $n=3,4$ and $q$ is very close to $n$, and higher dimension for $-1<p<q<\min\{n,n+p\}$ with $q>0$ under symmetric assumptions. In our paper, we first answer Problem \ref{problem} when $q\in(1,n]$, $p\in(0,q)$ and the given data is smooth without symmetric assumptions.
	\begin{thm}
		\label{C0 estimate in introduction}
		For $n\geq2$, $q\in(1,n]$ and $p\in(0,q)$, let $f$ be a smooth function on $\sn$.  There exists a constant $C$ depending on $n,p,q,\lambda$ such that if
		\begin{equation*}
			\lambda^{-1}\mathcal{H}^{n-1}\leq\tilde{C}_{p,q}(K,\cdot)\leq \lambda\mathcal{H}^{n-1}
		\end{equation*}
		with $d\tilde{C}_{p,q}(K,\cdot)=fd\mathcal{H}^{n-1}$ for any $K\in\kno$, then
		\begin{equation*}
			\max_{x\in\sn}h_{K}(x)\leq C\ \text{and}\ |K|\geq\frac{1}{C}.
		\end{equation*}
	\end{thm}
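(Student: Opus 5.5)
The plan is to work with the Monge--Amp\`ere equation \eqref{MA equation in introduction}, with $h=h_K$ smooth and $\lambda^{-1}\le f\le\lambda$, and to combine three tools: the maximum principle at extremal points of $h$ and of the radial function $\rho=\rho_K$, the integral identity behind $\tilde C_{p,q}$, and a geometric comparison showing that a long, thin body forces the two sides of that identity to scale differently.

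\emph{Step 1: pointwise bounds at extremal points.} At a minimum point of $h$ we have $\nabla h=0$ and $\nabla^2h+hI\ge hI$. The equation then gives $h_{\min}^{\,q-p}\le\lambda$, so $\min h\le\lambda^{1/(q-p)}$. At a maximum point of $h$ we have $\nabla^2 h+hI\le hI$, which gives $\max h\ge\lambda^{-1/(q-p)}$. Both estimates use $q>p$.

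\emph{Step 2: the integral identity.} By the definition of the $L_p$ dual curvature measure, $d\tilde C_q(K,\cdot)=h^p\,d\tilde C_{p,q}(K,\cdot)$. Taking total mass,
\[
\frac1n\int_{\sn}\rho^q\,d\mathcal H^{n-1}=\tilde V_q(K)=\int_{\sn}h^pf\,d\mathcal H^{n-1}.
\]
Set $R=\max h=|x_0|$ with $x_0=Ru_0\in\partial K$. Since $h(x)\le R$, the right-hand side is at most $\lambda R^p|\sn|$. So everything reduces to bounding $\tilde V_q(K)$ from below by a quantity that grows faster than $R^p$ once $R$ is large. Since $p<q$, the natural target is
\[
\tilde V_q(K)\ \ge\ c\,R^{q-\varepsilon}\quad\text{for some }\varepsilon<q-p,
\]
or at least a bound of the form $R^q\,\phi(K)$ in which the degeneracy $\phi$ is controlled separately.

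\emph{Step 3: the lower bound for $\tilde V_q$ (main obstacle).} $K$ contains $\mathrm{conv}(\{0,x_0\}\cup B)$, where $B$ is an inscribed ball. The radial function is at least $cR$ on a spherical cap around $u_0$, but the angular size of that cap is only comparable to $r/R$, where $r$ measures how far the origin sits from $\partial K$ transversally. This gives roughly $\tilde V_q\gtrsim R^q(r/R)^{n-1}$, so the thinness of $K$ has to be controlled.

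My first attempt at this is a second application of the equation, integrated against $\mathcal H^{n-1}$ rather than against $h^p$:
\[
\int_{\sn}h^{1-p}\rho^{q-n}\det(\nabla^2h+hI)\,d\mathcal H^{n-1}\ \ge\ \lambda^{-1}|\sn|.
\]
The left-hand side is an integral over $\partial K$ through the Gauss map. When $K$ is thin, or the origin is near $\partial K$, the factor $h^{1-p}$ is small on most of the surface, and this should force a lower bound on the transversal width. This is where $p>0$ enters, so that $h^{-p}$ is not integrable against a degenerating measure. The restriction $q\le n$ makes $\rho^{q-n}\le (\min\rho)^{q-n}$, which is harmless once $\min\rho=\min h$ is bounded below. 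The condition $q>1$ should be what makes the cap contribution $R^q(r/R)^{n-1}$ beat $R^p$. Concretely, I expect to prove $\min h\ge c$ by contradiction: take a sequence $K_i$ with $\min h_{K_i}\to0$, normalize, use Blaschke selection, and compare the limiting distributions of the two integral identities near the face through the origin. This compactness step is where I anticipate the real difficulty.

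\emph{Step 4: conclusion.} Once $\min h\ge c$ and $\max h\le C$ are established, $K$ contains the ball of radius $c$ about the origin. Hence $|K|\ge c^n|B^n|$, which is the stated volume bound.
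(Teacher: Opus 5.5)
\textbf{There is a genuine gap at Step 3.} Your Steps 1 and 2 are correct; they are exactly the extremal-point bounds and the integral identity the paper uses. But Step 3 is where the whole bound $\max h\le C$ has to come from, and it contains no argument. Moreover, the mechanism you sketch cannot close in the stated range.

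Even granting that the origin stays at distance $r\ge c$ from $\partial K$, your cap estimate gives only
\[
\tilde V_q(K)\ge c_0R^{q}(r/R)^{n-1}=c_0r^{n-1}R^{q-n+1}.
\]
This beats $\lambda R^p|\sn|$ only when $p<q-n+1$, and the hypothesis $q>1$ plays no role in that comparison. That is essentially the restricted range of the geometric approach in BCLS (e.g.\ $q>p+2$ in $\mathbb R^3$). For $q-n+1\le p<q$, take a long needle of length $R$ and tune its transversal width $c(R)\ll R$. Then $\tilde V_q(K)\approx R^p\approx\int_{\sn}h^p$, so the total-mass identity, together with the crude bounds of Step 1, cannot force $R$ to stay bounded.

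Your second inequality is just $\int_{\sn}f\ge\lambda^{-1}|\sn|$ rewritten, and you extract no estimate from it. Finally, the intermediate goal $\min h\ge c$ is stronger than the theorem and is not proved in the paper. For $p<1$ it is not even expected: the origin may lie on or approach $\partial K$ while the density stays pinched between $\lambda^{-1}$ and $\lambda$. This is why Problem \ref{problem} is phrased with $|K|\ge 1/C$ rather than a lower bound on $\min h$.

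\textbf{The missing idea is a pointwise estimate from the PDE.} The paper applies the maximum principle to $G=|\nabla h|/h^{\alpha}$, with $\alpha\in(0,1)$ small so that $p\ge(n+3)\alpha$; this is where $p>0$ is used. At a maximum point one has $h_{11}=\alpha h_1^2/h$, and the once-differentiated equation controls $w^{ij}w_{ij1}$. The bound
\[
\sum_i w^{ii}\ge(n-1)\bigl(h^{p-1}(h^2+h_1^2)^{\frac{n-q}{2}}f\bigr)^{-\frac{1}{n-1}}
\]
then yields a contradiction in each regime of $h$ and $h_1/h$. The critical threshold is $h_1\approx h^{1-\frac{q-p}{n-1}}$, and this is where $1<q\le n$ and $p<q$ enter. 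The outcome is $|\nabla h|\le Ch^{\alpha}$. Integrating $\nabla(h^{1-\alpha})$ along a great circle from a minimum point to a maximum point gives $h_{\max}^{1-\alpha}\le h_{\min}^{1-\alpha}+C$. Combined with your bound $h_{\min}\le\lambda^{1/(q-p)}$, this bounds $h_{\max}$. Note that this argument uses $\|f\|_{C^1}$ through the differentiated equation, not only $\lambda$.

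\textbf{The volume bound needs no lower bound on $\min h$.} Since $h(x)\ge h_{\max}\langle x,u_0\rangle_+$ and $h_{\max}\ge\lambda^{-1/(q-p)}$, your Steps 1--2 already give
\[
\tilde V_q(K)=\int_{\sn}h^pf\ge\lambda^{-1}\lambda^{-\frac{p}{q-p}}\int_{\sn}\langle x,u_0\rangle_+^p\,dx .
\]
H\"older's inequality with $q\le n$ then gives $|K|\ge c\,\tilde V_q(K)^{n/q}$. This is exactly the paper's argument.
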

	Using the similar argument of B\"{o}r\"{o}czky-Chen-Liu-Saroglou \cite{BCLS-1}, we deduce the following uniqueness results. Along the way, we also use the uniqueness results of \eqref{MA equation in introduction} in Chen-Li \cite{CL21} when $f\equiv1$ and $1<p<q\leq n$.
	\begin{thm}\label{uniqueness result}
		Suppose $n\geq2$, $\alpha\in(0,1)$, $q\in(1,n]$, $p\in(0,q)$. There exists a constant $\epsilon>0$ depending only on $n,\alpha,p$ and $q$ such that, if $f$ is a smooth function on $\sn$ satisfying $\|f-1\|_{C^\alpha(\sn)}<\epsilon$, and
		\begin{itemize}
			\item either, $p\in(0,1)$ and $q\in(n-\epsilon,n]$,
			\item or $1<p<q\leq n$,
		\end{itemize}
		then equation \eqref{MA equation in introduction}
		has a unique smooth solution in $\kno$.
	\end{thm}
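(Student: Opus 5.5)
We argue by contradiction and compactness, combined with a local uniqueness statement near the constant solution, as in B\"or\"oczky-Chen-Liu-Saroglou \cite{BCLS-1}. Existence of a smooth solution for $f$ close to $1$ follows from Chen-Li \cite{CL21} (for $p>0$, $q\neq n$), or from the continuity method or degree theory once a priori estimates are available. The real content is therefore uniqueness.

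Suppose the statement fails. Then there are smooth $f_k$ with $\|f_k-1\|_{C^\alpha}\to0$ and exponents $(p_k,q_k)$, admissible in the relevant regime, such that equation \eqref{MA equation in introduction} with data $f_k$ has two distinct solutions $h_k\neq \bar h_k$. In the first regime $q_k\to n$; in the second, $(p_k,q_k)$ is fixed, or we take $(p,q)$ fixed throughout, since $\epsilon$ may depend on $p,q$.

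\textbf{Step 1 (a priori estimates).} Theorem \ref{C0 estimate in introduction} applies, since $\lambda^{-1}\le f_k\le\lambda$ with, say, $\lambda=2$. It gives $\max h_k\le C$ and $|K_k|\ge 1/C$, which yield a uniform positive lower bound $h_k\ge c>0$; this is standard, using $p>0$ and the volume bound. The equation is then uniformly elliptic Monge-Amp\`ere with $C^\alpha$ right-hand side, and Caffarelli's theory gives uniform $C^{2,\alpha}$ bounds. Along a subsequence, $h_k,\bar h_k\to h_\infty,\bar h_\infty$ in $C^{2}$, and both limits solve the isotropic equation $f\equiv1$. For $1<p<q\le n$, the Chen-Li classification \cite{CL21} forces both limits to be the unique constant solution $h\equiv c_0$. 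For $p\in(0,1)$ and $q\to n$ the limit is the $L_p$ Minkowski problem ($q=n$) with constant data, whose solutions are only the ball, by Brendle-Choi-Daskalopoulos/Saroglou type results for $p\in(-n,1)$ and origin-containing bodies. So again $h_\infty=\bar h_\infty=c_0$.

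\textbf{Step 2 (local uniqueness via linearization).} Set $w_k=(h_k-\bar h_k)/\|h_k-\bar h_k\|_{C^{2}}$. Subtracting the two equations, after taking logarithms, gives a linear equation $L_k w_k=0$ with coefficients converging to those of the linearization at $c_0$:
\begin{equation*}
Lw=c_0^{-1}\bigl(\Delta w+(n-1)w\bigr)+(q-n)c_0^{-1}w+(1-p)c_0^{-1}w .
\end{equation*}
Up to the factor $c_0^{-1}$ this is $\Delta w+(q-p)w$. Schauder estimates give convergence of $w_k$ to some $w\neq0$ with $\Delta w+(q-p)w=0$ (in the first regime, with $q=n$). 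Hence $q-p$ is an eigenvalue $l(l+n-2)$ of $-\Delta$. In the generic case $q-p\notin\{l(l+n-2)\}$ this is an immediate contradiction.

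\textbf{Main obstacle.} The difficulty is when $q-p=n-1$ (the $l=1$ eigenvalue, linear functions), which lies in the range. In fact $q-p\in(0,n-1]$, so $l=1$ is the only possibly resonant mode, since $2n$ exceeds the range. Here one must use the non-degeneracy of the problem modulo the translation-like kernel, or a second-order expansion, as in \cite{BCLS-1}. I would first normalize by an orthogonality condition: project $w_k$ off linear functions, using that any kernel element $w=\langle x,v\rangle$ must be excluded by an integral identity. Multiplying the difference equation by $x_i$ and integrating, the Minkowski-type identity $\int x_i\,h^{1-p}\,\cdots=0$ at order two produces a nonzero multiple of $v$. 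If that does not work directly, I would restrict to $p\neq q-n+1$, as the admissible ranges essentially guarantee: for $p\in(0,1)$, $q$ near $n$ gives $q-p>n-1$ unless $p\approx1$, which is outside the range; for $1<p<q\le n$ we get $q-p<n-1$. Thus the resonance is avoided, the kernel is trivial, and we obtain the contradiction.
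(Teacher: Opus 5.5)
Your overall architecture matches the paper's. Compactness forces every solution towards the isotropic one, which is the ball by Chen--Li (second regime) or Brendle--Choi--Daskalopoulos (first regime), and nondegeneracy of the linearization $\Delta+(q-p)$ at $h\equiv1$ gives local uniqueness. Your blow-up of $w_k=(h_k-\bar h_k)/\|h_k-\bar h_k\|_{C^2}$ is a hand-made version of the paper's inverse function theorem step.

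\textbf{The gap is in Step 1.} There you assert that $\max h_k\le C$ and $|K_k|\ge C^{-1}$ give $h_k\ge c>0$ by a ``standard'' argument. For bodies that are not origin-symmetric these two bounds say nothing about $\min h$: translate a fixed ball until the origin approaches its boundary. The equation does not rescue you cheaply either. The right-hand side of $\det(\nabla^2h+hI)=h^{p-1}(h^2+|\nabla h|^2)^{\frac{n-q}{2}}f$ degenerates as $h\to0$. For $p<1$, where $h^{p-1}$ blows up, I believe the origin can reach the boundary under mere two-sided bounds on a positive density already for $q=n$, so you should not rely on this bound being available a priori. Without it you have neither uniform ellipticity nor uniform Caffarelli $C^{2,\alpha}$ estimates. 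So neither the $C^2$ convergence to classical solutions of the isotropic equation in Step 1 nor the uniform Schauder bounds for $w_k$ in Step 2 are justified yet.

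The paper proceeds in the opposite order:
\begin{enumerate}
\item take a Hausdorff limit $K_\infty$ by Blaschke selection;
\item pass to the limit in the measures (for $q_m\to n$ via the weak convergence $\tilde{C}_{p,q_m,K_m}\to S_{p,K_\infty}$ quoted from BCLS);
\item identify $K_\infty$ as the unit ball using the cited uniqueness results;
\item only then deduce $h_{K_m}\to1$ uniformly, which supplies the lower bound, after which Caffarelli and Schauder give $C^{2,\alpha}$ convergence.
\end{enumerate}
With Step 1 reorganized this way, your Step 2 goes through.

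\textbf{The ``main obstacle'' is not an obstacle.} Your claim $q-p\in(0,n-1]$ is false in the first regime, where $q-p$ is close to $n-p>n-1$; you contradict it yourself at the end. Along your contradicting sequence $q_k\to n$, so the limiting operator is $\Delta+(n-p)$ with $n-p\in(n-1,n)$. This lies strictly between the eigenvalues $n-1$ and $2n$ of $-\Delta$. In the second regime, $p>1$ and $q\le n$ give $0<q-p<n-1$. In the paper's notation, $\tilde\lambda_0>0$ and $\tilde\lambda_k<0$ for $k\ge2$. The remaining eigenvalue $\tilde\lambda_1=q-p-(n-1)$ is negative in the second regime and tends to $1-p>0$ in the first. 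So the kernel is trivial in both cases, and no orthogonality condition, Minkowski-type identity or second-order expansion is needed; that part should simply be deleted.

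Invertibility of the linearization also gives existence near $f\equiv1$ directly by the inverse function theorem. You need this at $q=n$, where Chen--Li's existence result does not apply.
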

	
	\subsection{Brunn-Minkowski inequality}
	The Brunn-Minkowski inequality, as another important topic in Brunn-Minkowski theory, states that for given convex bodies $K$ and $L$ in $\mathbb{R}^n$,
	\begin{equation*}
		V_n((1-\lambda)K+\lambda L)^{\frac{1}{n}}\geq (1-\lambda)V_n(K)^{\frac{1}{n}}+\lambda V_n(L)^{\frac{1}{n}},\quad \forall~\lambda\in[0,1],
	\end{equation*}
	with equality if and only if $K$ and $L$ are homothetic. This is equivalent to the log-concavity of the volume functional 
	\begin{equation*}
		V_n((1-\lambda)K+\lambda L)\geq V_n(K)^{1-\lambda}V_n(L)^{\lambda},\quad \forall~\lambda\in[0,1].
	\end{equation*}
	The relationship between the Brunn-Minkowski inequality and other inequalities in geometry and analysis, along with some applications, refer to the survey article by Gardner \cite{G02}. 
	
	In 1962, Firey \cite{F62} generalized the Minkowski sum of convex bodies to the $L_p$-sum $(1-\lambda)K+_p\lambda L$.
	When $p\geq1$, it is defined as the convex body with support function $((1-\lambda)h_K^p+\lambda h_L^p)^{\frac{1}{p}}$. Firey also established the corresponding $L_p$ Brunn-Minkowski inequality
	\begin{equation}\label{E:10}
		V_n((1-\lambda)K+_p\lambda L)^{\frac{p}{n}}\geq (1-\lambda)V_n(K)^{\frac{p}{n}}+\lambda V_n(L)^{\frac{p}{n}}, \quad\forall~\lambda\in[0,1],
	\end{equation}
	with equality if and only if $K=L$. 
	
	A breakthrough came from B\"or\"oczky-Lutwak-Yang-Zhang \cite{BLYZ12}, which studied the $L_p$ Brunn-Minkowski inequality for $p\in[0,1)$, covering the previously missing range. It is well-known that $((1-\lambda)h_K^p+\lambda h_L^p)^{\frac{1}{p}}$ cannot be a support function for any convex body when $p\in[0,1)$. However, they found a natural generalization of $L_p$-sum as follows
	\begin{equation*}
		(1-\lambda)K+_p\lambda L
		=\cap_{x\in\mathbb{S}^{n-1}}\{z\in\mathbb{R}^n: x\cdot z\leq ((1-\lambda)h_K^p(x)+\lambda h_L^p(x))^{\frac{1}{p}}\}.
	\end{equation*}
	When $p=0$, it is actually the log Minkowski sum,
	\begin{equation*}
		(1-\lambda)K+_0\lambda L
		=\cap_{x\in\mathbb{S}^{n-1}}\{z\in\mathbb{R}^n: x\cdot z\leq h_K(x)^{1-\lambda}h_L(x)^{\lambda}\},
	\end{equation*}
	where $K$ and $L$ are convex bodies that contain the origin in their interiors. Meanwhile, when $p\in(0,1)$, for two origin-symmetric convex bodies $K$ and $L$ in $\mathbb{R}^n$, they conjectured the $L_p$ Brunn-Minkowski inequality \eqref{E:10}
	and the log Brunn-Minkowski inequality when $p=0$,
	\begin{equation*}
		V_n((1-\lambda)K+_0\lambda L)\geq V_n(K)^{1-\lambda}V_n(L)^{\lambda} \quad\forall~\lambda\in[0,1].
	\end{equation*}

	Recently, Kolesnikov-Milman \cite{KM22} introduced a local perspective: instead of proving the inequality for all convex bodies directly, they studied its infinitesimal version. Hence, they developed a local $L_p$ Brunn-Minkowski inequality for $p\in[1-\frac{c}{n^{\frac{3}{2}}},1)$, which stated that when $K, L\in \mathcal{K}_{+,e}^2$ satisfies
	\begin{equation*}
		(1-\lambda)K+_p\lambda L\in\mathcal{K}_{+,e}^2,\quad\forall~\lambda\in[0,1],
	\end{equation*}
	where $\mathcal{K}_{+,e}^2$ is the class of origin-symmetric convex bodies with $C^2$ smooth boundary and strictly positive curvature, then \eqref{E:10} holds for $n\geq 2$.
	Moreover,
	Chen-Huang-Li-Liu \cite{CHLL20} gave the equivalence of the $L_p$ Brunn-Minkowski inequality \eqref{E:10} to the local version, by adapting the PDE methods. This result showed that studying the local inequality is not merely a technical simplification but a legitimate route to the global conjecture. Putterman \cite{P21} also proved the equivalence between local and global from a geometric perspective. 
	
	Corresponding to the near spherical uniqueness of Minkowski problem discussed in the preceding context, we further investigate the Brunn-Minkowski inequality in the near spherical setting. The study of near spherical Brunn-Minkowski inequality can be traced back to the work in Colesanti-Livshyts-Marsiglietti \cite{CLM17}. They investigated the Brunn-Minkowski inequality (and log version) for radially symmetric log-concave measures when the given two convex bodies are near the ball. Bianchini-Colesanti-Pagnini-Roncoroni \cite{BCPR23} verified the validity of the $L_p$ Brunn–Minkowski inequality for quermassintegrals of symmetric convex bodies in $\mathbb{R}^n$, under the condition that one body is the ball and the other one lies in a small neighborhood of the ball. Subsequently, Patsalos-Saroglou \cite{PS26} improved the results in \cite{BCPR23} to a symmetric convex body near the ball, while the other one is arbitrary symmetric convex body. Furthermore, they proved the near spherical uniqueness of the corresponding Minkowski problem based on the near spherical Brunn-Minkowski inequality.
	
	The Brunn-Minkowski inequality for dual quermassintegrals was conjectured by Lutwak and has been presented in various talks and seminars by the Lutwak-Yang-Zhang's group and their collaborators. It states that if $K$ and $L$ are convex bodies, then for $q>0$,
	\begin{equation}\label{E:12}
		\tilde{V}_q((1-\lambda)K+\lambda L)^{\frac{1}{q}}\geq (1-\lambda)\tilde{V}_q(K)^{\frac{1}{q}}+\lambda\tilde{V}_q(L)^{\frac{1}{q}},\quad \forall~\lambda\in[0,1],
	\end{equation}
	with equality if and only if $K$ and $L$ are dilates of each other. One can also see these details in \cite{HYZ25}. Xi-Zhang \cite{XZ22} solved the conjecture when $q\in(0,1]$ from a geometric way. Recently, based on the analytic tools developed by Kolesnikov-Milman \cite{KM17, KM18}, Kolesnikov-Livshyts \cite{KL21}, and the estimate from Cordero-Erausquin and Rotem \cite{CR23}, Sadovsky-Zhang\cite{SZ25} confirmed the conjecture \eqref{E:12} for $0<q\leq n$ under symmetric assumptions, without the characterization of equality.
	
	We mainly focus on the $L_p$ Brunn-Minkowski inequality for dual quermassintegral, which is considered by Xi-Zhang \cite{XZ22} that, if it holds for any convex bodies $K$ and $L$ containing the origin in
	their interiors that
	\begin{equation}\label{E:15}
		\tilde{V}_q((1-\lambda)K+_p\lambda L)^{\frac{p}{q}}\geq (1-\lambda)\tilde{V}_q(K)^{\frac{p}{q}}+\lambda\tilde{V}_q(L)^{\frac{p}{q}},\quad\forall~\lambda\in[0,1]
	\end{equation}
	for $q\neq0$. They answered this question when $p\geq q$. The case $p<q$
	is likewise both interesting and challenging, which constitutes the main focus of our final result.
	
	For the case $p\geq1$, we employ a classic mathematical argument to tackle the problem. This result follows naturally from the Brunn-Minkowski inequality for $p=1$, i.e. the results established by Sadovsky-Zhang\cite{SZ25}. This also explain the reason that our constraints on the parameter $q$ and convex bodies coincide with those in their work.
	For $p\in(0,1)$, we use the near spherical uniqueness of Minkowski problem to prove the near spherical Brunn-Minkowski inequality for dual quermassintegrals. We first prove the equivalence between the $L_p$ Brunn-Minkowski inequality and the $L_p$ Minkowski inequality for dual quermassintegral (the explicit form of the $L_p$ Minkowski inequality is given in \eqref{E:2}). Building upon this equivalence, we show the $L_p$
	Minkowski inequality by using the near spherical uniqueness of the $L_p$
	Minkowski problem, and this argument requires one of the convex bodies to be sufficiently close to a ball. It is worth noting that \cite{PS26} adopted the inverse direction of perspective to this paper, utilizing the Brunn-Minkowski inequality to establish the uniqueness of solutions to the Minkowski problem. In a certain sense, this phenomenon boils down that the uniqueness of the Minkowski problem is essentially equivalent to the validity of the Brunn-Minkowski inequality. Our results are stated as follows.
	\begin{thm}
		\label{lp BMI no near ball}
		Let $n\geq2$. Then for $K,L\in\kne$, \eqref{E:15} holds when
		\begin{itemize}
			\item either, $p\geq1$ and $q\in(0,n]$,
			\item or, $p\in(0,1)$, $q\in(p,n]$, $K,L\in\mathcal{K}^{2,\alpha}_{+,e}$, and for some $\epsilon>0$, $\|h_K-1\|_{C^{2,\alpha}}\leq\epsilon$.
		\end{itemize}
	\end{thm}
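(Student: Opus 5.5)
For the first case ($p\geq1$, $q\in(0,n]$) I would derive \eqref{E:15} from the $p=1$ inequality \eqref{E:12} of Sadovsky--Zhang \cite{SZ25} for origin-symmetric bodies. The plan has three steps.

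\begin{enumerate}
\item \emph{Reduce to equal dual quermassintegrals.} Since $\tilde V_q$ is homogeneous of degree $q$ and the $L_p$ sum is $p$-homogeneous in the appropriate sense, it suffices to treat $\tilde V_q(K)=\tilde V_q(L)=1$. Indeed, for general $K,L$ write $K=\tilde V_q(K)^{1/q}\bar K$ and $L=\tilde V_q(L)^{1/q}\bar L$. Choosing the new weight $\mu=\lambda \tilde V_q(L)^{p/q}/\big((1-\lambda)\tilde V_q(K)^{p/q}+\lambda\tilde V_q(L)^{p/q}\big)$ gives
\[
(1-\lambda)K+_p\lambda L=\big((1-\lambda)\tilde V_q(K)^{p/q}+\lambda\tilde V_q(L)^{p/q}\big)^{1/p}\big((1-\mu)\bar K+_p\mu\bar L\big).
\]
\item \emph{Compare the $L_p$ and Minkowski combinations.} For $p\ge1$, the power mean inequality gives $((1-\lambda)h_K^p+\lambda h_L^p)^{1/p}\ge(1-\lambda)h_K+\lambda h_L$. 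Hence $(1-\lambda)K+_p\lambda L\supseteq(1-\lambda)K+\lambda L$.
\item \emph{Apply monotonicity and \eqref{E:12}.} By monotonicity of $\tilde V_q$ under inclusion (true for $q>0$) and \eqref{E:12} with the normalization, $\tilde V_q\ge 1$, which is the normalized form of \eqref{E:15}. Origin-symmetry is preserved throughout, so \cite{SZ25} applies.
\end{enumerate}

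For the second case ($0<p<1$), I would follow the strategy announced in the introduction and split it into two parts.

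\begin{enumerate}
\item \emph{Equivalence with a Minkowski-type inequality.} First prove that \eqref{E:15} is equivalent to the $L_p$ Minkowski inequality for dual quermassintegrals,
\[
\frac1p\int_{\sn} h_L^p h_K^{-p}\,d\tilde C_{q}(K,\cdot)\ \ge\ \frac{\tilde V_q(K)}{p}\Big(\frac{\tilde V_q(L)}{\tilde V_q(K)}\Big)^{p/q},
\]
suitably normalized, i.e.\ $\int h_L^p\,d\tilde C_{p,q}(K,\cdot)\ge \tilde V_q(K)^{1-p/q}\tilde V_q(L)^{p/q}$.
\begin{itemize}
\item \emph{Minkowski inequality $\Rightarrow$ \eqref{E:15}.} Set $K_\lambda=(1-\lambda)K+_p\lambda L$ and use the variational formula of Lutwak--Yang--Zhang \cite{LYZ18}: $\frac{d}{dt}\tilde V_q$ along $L_p$ combinations equals $\frac qp\int(\cdot)\,d\tilde C_{p,q}$. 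Together with the Aleksandrov body identity $h_{K_\lambda}^p=(1-\lambda)h_K^p+\lambda h_L^p$ a.e.\ with respect to $\tilde C_{p,q}(K_\lambda,\cdot)$, this shows that $\lambda\mapsto \tilde V_q(K_\lambda)^{p/q}$ is concave.
\item \emph{\eqref{E:15} $\Rightarrow$ Minkowski inequality.} Differentiate at $\lambda=0$.
\end{itemize}
Note that in the local setting, the requirement that $K$ be near the ball must propagate: we only need the Minkowski inequality with the near-spherical body in the first slot. The concavity argument needs it at each $K_\lambda$, however, so I would instead run the argument in the form: minimize $\int h^p\,d\tilde C_{p,q}(K,\cdot)$ over normalized $L$.

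\item \emph{Proof of the Minkowski inequality via uniqueness.} Given $K$ with $\|h_K-1\|_{C^{2,\alpha}}\le\epsilon$, consider the variational problem of minimizing $\Phi(L)=\int h_L^p\,d\tilde C_{p,q}(K,\cdot)$ subject to $\tilde V_q(L)=\tilde V_q(K)$ over symmetric $L$.
\begin{itemize}
\item \emph{Existence and regularity of a minimizer.} A minimizer $L_0$ exists, since symmetry and the positive density of $\tilde C_{p,q}(K,\cdot)$ give compactness. Its Euler--Lagrange equation says $\tilde C_{p,q}(L_0,\cdot)=c\,\tilde C_{p,q}(K,\cdot)$. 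The density $f=c\,h_K^{1-p}(h_K^2+|\nabla h_K|^2)^{(q-n)/2}\det(\nabla^2h_K+h_KI)$ is $C^\alpha$-close to a constant.
\item \emph{Identification via near-spherical uniqueness.} By the near-spherical uniqueness of the type in Theorem~\ref{uniqueness result} (in its symmetric version for all $q\in(p,n]$, as in \cite{BCLS-1, CH25}), $L_0=K$ up to the normalization. Hence $\Phi(L)\ge\Phi(K)=\tilde V_q(K)$, which is the Minkowski inequality.
\end{itemize}
\end{enumerate}

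The main obstacle is this last step. Two points need care. First, the uniqueness must hold for even $C^\alpha$ densities across the whole range $q\in(p,n]$, $p\in(0,1)$, whereas Theorem~\ref{uniqueness result} covers only $q$ near $n$. Here I would rely on symmetric $C^0$ estimates and the isotropic uniqueness of Chen--Huang--Zhao \cite{CHZ19} for origin-symmetric solutions, followed by the same perturbation/implicit-function argument. Second, one must check that the minimizer is regular enough (in $\mathcal K^{2,\alpha}_{+,e}$) to apply that uniqueness. I would try to obtain this from Caffarelli regularity, using positive pinched density and the $C^0$ bounds.
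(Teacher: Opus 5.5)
Your treatment of $p\ge 1$ is correct and coincides with the paper's argument: normalize by $\tilde V_q^{1/q}$, use $(1-\lambda)K+\lambda L\subseteq(1-\lambda)K+_p\lambda L$ and monotonicity of $\tilde V_q$, then apply Sadovsky--Zhang. For $p\in(0,1)$ you also follow the paper's route. You prove \eqref{E:2} with the near-spherical body $K$ as reference body by minimizing $\Phi$, derive $\tilde C_{p,q}(L_0,\cdot)=c\,\tilde C_{p,q}(K,\cdot)$, and use Caffarelli regularity. You then identify $L_0$ with a dilate of $K$ by near-constant uniqueness for even data. The paper handles the point you call the main obstacle simply by citing the symmetric uniqueness theorem of B\"or\"oczky--Chen--Liu--Saroglou. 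That part is fine as a plan.

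The genuine gap is the passage from this inequality to \eqref{E:15}. You noticed that the concavity argument needs \eqref{E:2} with $K_\lambda=(1-\lambda)K+_p\lambda L$ as reference body. But your remedy, minimizing $\int h^p\,d\tilde C_{p,q}(K,\cdot)$, only reproves \eqref{E:2} with reference body $K$, and that cannot yield \eqref{E:15}. Set $f(\lambda)=\tilde V_q(K_\lambda)^{p/q}$. By the variational formula, \eqref{E:2} at $K$ against $L$ is equivalent to $f(1)-f(0)\le f'(0)$, which is only the $\lambda\to0^+$ consequence of \eqref{E:15}. More generally, \eqref{E:2} with reference body $K$ only bounds $\tilde V_q(N)$ from above for other bodies $N$. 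By contrast, \eqref{E:15} at an interior $\lambda$ is a lower bound for $\tilde V_q(K_\lambda)$. The only mechanism in this approach that produces such a lower bound is the identity
\[
\tilde V_q(K_\lambda)=(1-\lambda)\int_{\sn}\Big(\frac{h_K}{h_{K_\lambda}}\Big)^p d\tilde C_{q,K_\lambda}+\lambda\int_{\sn}\Big(\frac{h_L}{h_{K_\lambda}}\Big)^p d\tilde C_{q,K_\lambda}
\]
combined with \eqref{E:2} at $K_\lambda$, tested against $K$ and against $L$. For an arbitrary $L\in\mathcal K^{2,\alpha}_{+,e}$, the body $K_\lambda$ need not be close to the ball once $\lambda$ is not small, so no near-spherical uniqueness is available there.

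Be aware that the paper takes the same step without justification. Step 1 of its equivalence lemma applies \eqref{E:2} with $Q_\lambda$ as reference body, while Theorem~\ref{thm 10} provides \eqref{E:2} only when the reference body is $C^{2,\alpha}$-close to the ball. So combining the two does not cover arbitrary $L$ either.

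What the argument does prove is the case where $L$ is also $C^{2,\alpha}$-close to the ball, with $\epsilon$ small relative to the constant $\epsilon_0$ of Theorem~\ref{thm 10}. Then $q_\lambda=((1-\lambda)h_K^p+\lambda h_L^p)^{1/p}$ is $C^{2,\alpha}$-close to $1$ for all $\lambda\in[0,1]$, so $\nabla^2q_\lambda+q_\lambda I>0$ and $h_{K_\lambda}=q_\lambda$. Theorem~\ref{thm 10} then applies at every $K_\lambda$, and the identity above gives \eqref{E:15}. For a fixed arbitrary $L$ one gets \eqref{E:15} only for $\lambda\in[0,\lambda_0(L)]$. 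Removing the closeness assumption on $L$ requires an additional idea that neither your proposal nor the paper supplies.
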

	This paper is organized as follows. In Section \ref{Preliminaries}, we present some preliminaries for basic notations and convex bodies. For the $L_p$ dual Minkowski problem, in Section \ref{a priori estimate}, we establish $C^0$ estimates when $p\in(0,q), q\in (1,n]$, which is Theorem \ref{C0 estimate in introduction}, and in Section \ref{section uniqueness}, we derive the uniqueness results when the density function near the ball and prove Theorem \ref{uniqueness result}. In Section \ref{section lp dual Minkowski inequality}, we study the $L_p$ dual Brunn-Minkowski inequality \eqref{E:15} and give the proof of Theorem \ref{lp BMI no near ball}.
	
	\section{Preliminaries}\label{Preliminaries}
	In this section, some notations and introduction about convex bodies will be provided (see excellent reference \cite{G06,S93}) .
	
	\subsection{Basic Notations}
	Let $\mathbb{R}^n$ be the $n$-dimensional Euclidean space. The unit sphere in $\mathbb{R}^n$ is defined as $S^{n-1}$. $C(S^{n-1})$ represents the space of continuous functions on  $S^{n-1}$, and will always be viewed as equipped with the max-norm metric
	\begin{equation*}
		\|f-g\|_{\infty}=\max_{x\in S^{n-1}}|f(x)-g(x)|,
	\end{equation*}
	for $f,g\in C(S^{n-1})$. Denote by $C^+(S^{n-1})$ the set of positive continuous functions on $S^{n-1}$, and by $C^+_e(S^{n-1})$ the set of positive continuous even functions on $S^{n-1}$.
	
	A convex body $K$ in $\mathbb{R}^n$ is a compact convex set with nonempty interior. Let $\mathcal{K}_0^n$ be 
	a class of convex bodies that contain the origin in their interiors in $\mathbb{R}^n$, $\mathcal{K}_e^n$ be 
	a class of origin-symmetric convex bodies in $\mathbb{R}^n$,
	$\mathcal{K}_{+,e}^2$ be the subclass of $\kne$ with $C^2$ smooth boundary and strictly positive curvature in $\mathbb{R}^n$, and $\mathcal{K}_{+,e}^{2,\alpha}\subset\mathcal{K}_{+,e}^2$ with $C^{2,\alpha}$ smooth boundary.
	
	\subsection{Convex bodies}
	Let $K$ be a compact convex subset in $\mathbb{R}^n$, then the support function $h_K$ of $K$ is defined by
	\begin{equation}
		h_K(x)=\max\{x\cdot y: y\in K\},~~~~x\in\mathbb{R}^n.
	\end{equation}
	The support function $h=h_K: \mathbb{R}^n\rightarrow\mathbb{R}$ is a continuous function homogeneous of degree 1. Suppose $K$ contains the origin in its interior, then the radial function $\rho=\rho_K: \mathbb{R}^n\backslash\{0\}\rightarrow\mathbb{R}$ is defined by
	\begin{equation}
		\rho_K(x)=\max\{\lambda: \lambda x\in K\},~~~~x\in \mathbb{R}^n\backslash\{0\}.
	\end{equation}
	The radial function $\rho_K$ is a continuous function homogeneous of degree $-1$. 
	
	For each $f\in C^+(\sn)$, the Wulff shape $[f]$ generated by $f$ is the convex body defined by
	\begin{equation*}
		[f]=\{x\in\mathbb{R}^n: x\cdot u\leq f(u), \text{for all}~~ u\in S^{n-1}\}.
	\end{equation*}
	It is obvious that $h_{[f]}\leq f$ and $[h]=K$ for each $K\in\mathcal{K}_0^n$.
	
	Suppose $K_i\in\mathcal{K}_0^n$ is a sequence of convex bodies in $\mathbb{R}^n$, we say $K_i$ converges to a compact convex subset $K\subset\mathbb{R}^n$ with respect to the Hausdorff metric provided that
	\begin{equation*}
		\|h_{K_i}(u)-h_K(u)\|_{\infty}=\max_{u\in S^{n-1}}|h_{K_i}(u)-h_K(u)|\rightarrow 0,\quad i\rightarrow\infty.
	\end{equation*}
	If $K$ contains the origin in its interior, the above formula is equivalent to
	\begin{equation*}
		\|\rho_{K_i}(u)-\rho_K(u)\|_{\infty}=\max_{u\in S^{n-1}}|\rho_{K_i}(u)-\rho_K(u)|\rightarrow 0,\quad i\rightarrow\infty.
	\end{equation*}
	
	We use $\nu_K$ to denote the Gauss map that takes $x\in \partial K$ to its unique outer unit normal which is due to the convexity of $K$, and the map $\nu_K$ is almost everywhere defined on $\partial K$. We use $\nu_K^{-1}$ to denote the inverse Gauss map. Since $K$ is not assumed to be strictly convex, the map $\nu_K^{-1}$ is set-valued map and for each set $\eta\subset\sn$, 
	\begin{equation*}
		\nu_K^{-1}(\eta)=\{x\in\partial K:\nu_K(x)\in\eta\}.
	\end{equation*}
	We employ the following renormalization for the Gauss and inverse Gauss map to enhance clarity. For $u\in S^{n-1}$, if $\nu_K$ is well-defined at $\rho_K(u)u\in\partial K$, then we write $\alpha_K(u)$ for $\nu_K(\rho_K(u)u)$. The reverse radial Gauss mapping $\alpha^{-1}_K$ is defined as follow:
	\begin{equation*}
		\alpha^{-1}_{K}(\eta)=\{u\in\sn:\nu_K(\rho_K(u))\in\eta\}\quad\text{for any}~\eta\in\sn.
	\end{equation*}
	It is well known that the Jacobian of $\alpha^{-1}_{K}$ is
	\begin{equation*}
		|\text{Jac} \alpha_K^{-1}|=\frac{h\det(\nabla^2h+hI)}{(h^2+|\nabla h|^2)^{\frac{n}{2}}}.
	\end{equation*}
	\section{A priori estimates}\label{a priori estimate}
	In this section, we provided uniform $C^0$ estimates of solutions to the $L_p$ dual Minkowski problem with $p\in(0,q)$ and $q\in(1,n]$. The method is inspired by \cite{HL13}.
	
	\begin{lem}\label{lem}
		Assume that $h\in C^3(\sn)$ is a solution of \eqref{MA equation in introduction} for $q\in(1,n]$ and $p\in(0,q)$, then there is a positive constant $0<\alpha<1$ such that
		\begin{equation}\label{gdfg}
			\frac{|\nabla h|}{h^\alpha} \leq C,
		\end{equation}
		where $C$ depends only on $\|f\|_{C^1(\sn)},$  $\min\limits_{\sn}f$, $\max\limits_{\sn}f$, $\alpha, n, p$ and $q$.
	\end{lem}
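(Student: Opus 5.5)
We apply the maximum principle to the auxiliary function
\begin{equation*}
\Phi=\frac{|\nabla h|^2}{h^{2\alpha}},
\end{equation*}
following the gradient-estimate strategy of \cite{HL13}, with $\alpha\in(0,1)$ fixed later in terms of $n,p,q$.

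\emph{Setup at the maximum point.} Let $x_0$ be a maximum point of $\Phi$ on $\sn$ and assume $\nabla h(x_0)\neq0$. Choose an orthonormal frame at $x_0$ with $\nabla h=h_1e_1$, $h_1=|\nabla h|$. Rotate $e_2,\dots,e_{n-1}$ so that the matrix $b_{ij}=h_{ij}+h\delta_{ij}$ is diagonal in these directions.
\begin{itemize}
\item The first-order condition $\nabla\Phi=0$ gives $h_kh_{k1}=\alpha h_1^2 h_1/h$ in direction $1$. Hence $h_{11}=\alpha h_1^2/h$ and $b_{11}=h+\alpha h_1^2/h$.
\item In directions $i\ge2$ it gives $h_{1i}=0$, so $b$ is fully diagonal at $x_0$.
\end{itemize}
The second-order condition says that $\Phi_{ij}$ is nonpositive definite. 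We contract it with $b^{ij}$ (the inverse of the positive definite matrix $b$), which yields $b^{ii}\Phi_{ii}\le0$.

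\emph{Using the equation.} Expand $b^{ii}\Phi_{ii}$ using $h_{kii}$, and commute derivatives via the Ricci identity on the sphere, $h_{iik}=h_{kii}+h_k-h_i\delta_{ik}$ type terms. The term $b^{ii}h_kh_{kii}$ is rewritten through the differentiated logarithm of the equation:
\begin{equation*}
b^{ii}b_{ii,k}=(\log f)_k-(1-p)\frac{h_k}{h}-(q-n)\frac{hh_k+h_lh_{lk}}{h^2+|\nabla h|^2}.
\end{equation*}
At $x_0$, contracting with $h_k$ gives $h_1(\log f)_1-(1-p)h_1^2/h-(q-n)h_1^2(h+h_{11})/(h^2+h_1^2)$. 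Since $h_{11}=\alpha h_1^2/h$, the last term equals $-(q-n)\frac{h_1^2}{h}\cdot\frac{h^2+\alpha h_1^2}{h^2+h_1^2}$. This is nonnegative because $q\le n$ (a good sign), and it is comparable to $(n-q)h_1^2/h$ when $h_1\gg h$.

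The remaining terms give:
\begin{itemize}
\item from the Hessian squares, $\sum_i b^{ii}h_{ii}^2$, which contains $\sum b_{ii}$-type positive terms;
\item from the $h^{-2\alpha}$ factor, terms like $-2\alpha\frac{h_1^2}{h}b^{ii}h_{ii}$ and $2\alpha(2\alpha+1)\frac{h_1^2}{h^2}b^{11}h_1^2$.
\end{itemize}
Substitute $h_{ii}=b_{ii}-h$ and $b^{11}=h/(h^2+\alpha h_1^2)$, so that $b^{11}h_1^2\approx h/\alpha$ for large $h_1/h$. One should then get, after dividing by $h^{2\alpha}$, an inequality of the form
\begin{equation*}
0\ge c_0\sum_i b_{ii}+(c_1-C\alpha)\frac{h_1^2}{h}\sum_i b^{ii}-C|\nabla\log f|\,h_1-C,
\end{equation*}
with $c_1>0$ coming from $p>0$ and $q>1$.

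\emph{Closing the estimate.} The goal is to show that $\Phi(x_0)$ large forces a contradiction. Bound $\sum b_{ii}$ from below using the equation and AM–GM: $\prod b_{ii}=f\,h^{p-1}(h^2+h_1^2)^{(n-q)/2}$. Combined with $\sum b^{ii}\ge(n-1)(\prod b_{ii})^{-1/(n-1)}$, this gives a lower bound of order $h_1^{\gamma}h^{-\beta}$. Comparing powers of $h_1$ and $h$ then yields $h_1\le Ch^{\alpha}$, after using $h\le C$ or normalizing.

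I expect this power-counting to be the main obstacle. The restrictions $q>1$ and $p\in(0,q)$ should be exactly what allow the good terms to beat $C|\nabla f|h_1$ and the bad $\alpha$-terms, for some small $\alpha\in(0,1)$. If the exponent bookkeeping fails, my first fallback is to replace $h^{-2\alpha}$ by $(h^2+|\nabla h|^2)$-weighted variants, e.g. $\log|\nabla h|^2-2\alpha\log h$, keeping the same scheme.
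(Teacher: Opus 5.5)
Your test function is the paper's ($\log\Phi=2\log G$ with $G=|\nabla h|/h^{\alpha}$), and your first-order relations $h_{11}=\alpha h_1^2/h$, $h_{1i}=0$ and the contraction with $b^{ij}$ are correct. The gap is the closing step, which is where the real difficulty lies.

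You cannot finish ``after using $h\le C$ or normalizing''. No upper bound on $h$ is available; in the paper this lemma is precisely what yields $\sup h\le C$ afterwards, so using it would be circular. Normalization also fails. The equation has degree $q-p\neq 0$ under $h\mapsto th$, so rescaling multiplies $f$ by the uncontrolled factor $t^{q-p}$, and $|\nabla h|/h^{\alpha}$ has degree $1-\alpha$. For the same reason, the error you call ``$-C$'' is not bounded: for instance the term $-(2-\alpha)\,h h_1^2/(\alpha h_1^2+h^2)$ is of order $h$.

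Nor can power counting at the maximum point $x_0$ alone close the argument. To control the regime $h_1\gg h$, $\alpha$ must be small relative to $p$ (the paper takes $p\ge (n+3)\alpha$). With that restriction, the best the inequality gives when $h(x_0)$ is large comes from $\sum_i b^{ii}\ge (n-1)\bigl(fh^{p-1}(h^2+h_1^2)^{\frac{n-q}{2}}\bigr)^{-\frac{1}{n-1}}$. This yields $h_1\le C_2h^{1-\frac{q-p}{n-1}}$, i.e.\ $G(x_0)\le C_2h(x_0)^{1-\alpha-\frac{q-p}{n-1}}$, and this exponent is positive for many admissible $(p,q)$.

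What is missing is the global bootstrap the paper uses. Split according to whether $h(x_0)\le M$ or $h(x_0)\ge M$.
\begin{itemize}
\item If $h(x_0)\le M$: either $h_1\le h/\alpha$, or $h_1\ge h/\alpha$. In the second case the positive coefficient $p-(n+2)\alpha+\alpha^2$ of $1/h$ forces $h_1\le \frac{|\nabla f|}{\alpha^2 f}h$. In both cases $G(x_0)\le CM^{1-\alpha}$.
\item If $h(x_0)\ge M$: the term $\alpha\sum_i b^{ii}$ has a lower bound growing like $h^{-1}h^{\frac{q-p}{n-1}}$. This uses $p<q$, and $q>1$ is needed in the range $h_1\ge C_1h$, where $(h^2+h_1^2)^{\frac{n-q}{2}}$ is large. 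This term beats all bad terms unless $h_1\le C_2h^{1-\frac{q-p}{n-1}}$. Hence $\max G=G(x_0)\le C_2M^{-\frac{q-p}{n-1}}(\max h)^{1-\alpha}$.
\end{itemize}
Now integrate $|\nabla h^{1-\alpha}|=(1-\alpha)G$ along a great-circle arc of length at most $\pi$ from a minimum point to a maximum point of $h$. Combine this with $\min h\le(\max f)^{\frac{1}{q-p}}$, which comes from the equation at the minimum. This gives $(\max h)^{1-\alpha}\le(\max f)^{\frac{1-\alpha}{q-p}}+\pi(1-\alpha)C_2M^{-\frac{q-p}{n-1}}(\max h)^{1-\alpha}$. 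Choosing $M$ large (last, after $C_1$ and $C_2$) bounds $\max h$, and therefore $G$. Without this step, or some other source of an upper bound for $h$, your proposal does not prove the lemma.
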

	
	\begin{proof}
		Consider the testing function $G=  \frac{|\nabla h|}{h^\alpha}, $ at its maximum $u_0\in \sn$. Without loss of generality, we choose
		normal coordinates at $u_0$ such that
		\begin{align}
			h_1&=|\nabla h| \tag{\theequation a}\\
			h_i&=0 \quad  \text{for } i=2,\cdots, n-1. \tag{\theequation b}
		\end{align}
		Writing equation \eqref{MA equation in introduction} as
		$$
		\log\det (h_{ij}+h\delta_{ij})=\log\det(w_{ij}) =(p-1)\log h+\frac{n-q}2\log(h^2+|\nabla h|^2)+\log f,
		$$
		and differentiating it in direction $e_1\in T_{u_0}\sn,$ we obtain
		
		$$
		w^{ij}w_{ij1}=\frac{f_1}{f}+(p-1)\frac{h_1}{h}+(n-q)\frac{hh_1+h_1h_{11}}{h^2+h_1^2}.
		$$
		At $u_0$,
		$$
		0=(\log G)_i=\frac{h_lh_{li}}{|\nabla h|^2}-\alpha \frac{h_i}{h}=\frac{h_{1i}}{h_1}-\alpha \frac{h_i}{h},
		$$
		i.e.
		\begin{equation}\label{eq:hy}
			h_{11}=\alpha\frac{h_1^2}{h}, \quad  h_{1i}=0  \quad  \text{for } i\geq 2.
		\end{equation}
		We may as well assume that $[h_{ij}(u_0)]$ is diagonal.  Noticing that $(\log G)_{ij}$ is nonpositive definite at $u_0$,
		\begin{eqnarray}\label{nbv}
			0&\geq& w^{ij}(\log G)_{ij}
			=w^{ij}(\frac{h_{li}h_{lj}}{h_1^2}+\frac{h_{1ij}}{h_1}-\frac{2h_{1i}h_{1j}}{h^2_1}-\frac{\alpha h_{ij}}{h}+\frac{\alpha h_ih_j}{h^2})\nonumber\\
			&\geq& w^{ij}(\frac{h_{1ij}}{h_1}-\frac{h_{1i}h_{1j}}{h^2_1}-\frac{\alpha h_{ij}}{h}+\frac{\alpha h_ih_j}{h^2})\nonumber\\
			&=&\frac 1{h_1}w^{ij}w_{ij1}-w^{11}+(\alpha-\alpha^2)w^{11}\frac{h_1^2}{h^2} -\frac{\alpha (n-1)}{h}+\alpha\sum\limits_i w^{ii},\nonumber
		\end{eqnarray}
		where we have used
		$h_{1ij}=w_{1ij}-\delta_{1i}h_j=w_{ij1}-\delta_{1i}h_j$  and $w_{ij}=h_{ij}+h\delta_{ij}.$
		Since
		\begin{eqnarray*}
			w^{11}&=&\frac{1}{h_{11}+h}=\frac{h}{\alpha h_1^2+h^2}\\
			w^{ij}w_{ij1}&=&\frac {(p-1)h_1}{h}+\frac{f_1}{f}+(n-q)\frac{hh_1+h_1h_{11}}{h^2+h_1^2}.
		\end{eqnarray*}
		Thus for $q\leq n,$
		
		\begin{equation} \label{eq:l3}
			\begin{split}
				0\geq& \frac{p-1}{h}+\frac{n-q}h\frac{h^2+\alpha h_1^2}{h^2+h_1^2}+\frac{f_1}{fh_1}+\frac{(\alpha-\alpha^2)h_1^2-h^2}{\alpha
					hh_1^2+h^3}-\frac{\alpha (n-1)}{h}+\alpha\sum\limits_i w^{ii}\\
				\geq& \frac{p-1}{h}+\frac{f_1}{fh_1}+\frac{(\alpha-\alpha^2)h_1^2-h^2}{\alpha
					hh_1^2+h^3}-\frac{\alpha (n-1)}{h}+\alpha\sum\limits_i w^{ii}.
			\end{split}
		\end{equation}
		
		We divide it into two cases to deal with the inequality according to
		value of $h(u_0).$ It is to say that there exists a positive
		constant $M$ depending only on $p, q, n, f$ and to be determined such
		that
		\begin{itemize}
			\item Case 1: $h(u_0)\leq M;$
			\item Case 2: $h(u_0)\geq M.$
		\end{itemize}
		Case 1: $h(u_0)\leq M:$
		Subcase$(1_A)$: $h_1\leq \frac{h}{\alpha}, $ then
		$\frac{h_1}{h^\alpha}\leq \frac{h^{1-\alpha}}{\alpha} \leq
		\frac{M^{1-\alpha}}{\alpha}.$
		
		Subcase$(1_B)$: $h_1\geq \frac{h}{\alpha}, $ then $\frac{h^2}{\alpha
			h_1^2+h^2}=\frac{1}{\alpha
			(\frac{h_1}{h})^2+1}\leq\frac{\alpha}{1+\alpha}<\alpha,$ combining
		this and \eqref{eq:l3}, we have
		
		\begin{equation} \label{eq:l4}
			\begin{split}
				0&\geq \frac{p-1}{h}+\frac{f_1}{fh_1}+\frac{(\alpha-\alpha^2)h_1^2-h^2}{\alpha
					hh_1^2+h^3}-\frac{\alpha (n-1)}{h}+\alpha\sum\limits_i w^{ii}\\
				&=\frac{p-1}{h}+\frac{f_1}{fh_1}+\frac{1}{h}\left[1-\alpha-\frac{(2-\alpha)h^2}{\alpha
					h_1^2+h^2}\right]-\frac{\alpha (n-1)}{h}+\alpha\sum\limits_i w^{ii}\\
				&\geq\frac{1}{h}[p-(2+n)\alpha+\alpha^2]+\frac{f_1}{fh_1}.
			\end{split}
		\end{equation}
		Here we set $p\geq(n+3)\alpha,$ then it yields
		$$
		0\geq\frac{\alpha^2}{h}-\frac{|\nabla f|}{fh_1},
		$$
		so
		\begin{equation}
			\frac{h_1}{h^\alpha}\leq \frac{|\nabla f|}{\alpha^2f}M^{1-\alpha}.
		\end{equation}
		Case 2: $h(u_0)\geq M.$ We divide Case 2 in three subcases as follows:
		
		Subcase$(2_A)$: $h_1\geq C_1h$, where $C_1$ is a positive constant, and chosen so that
		\begin{equation}
			\label{choose C1}
			\frac{\alpha(n-1)}{2}C_1^{\frac{q-1}{n-1}}\left(1+\frac{1}{C_1^2}\right)^{-\frac{n-q}{2(n-1)}}M^{\frac{q-p}{n-1}}(\max f)^{-\frac{1}{n-1}}>\max\left\{\frac{2-\alpha}{C_1\alpha},\frac{|\nabla f|}{f}\right\}.
		\end{equation}
		Since $h_1\geq C_1h$, thus
		\begin{equation}\label{special estimate in case 3}
			\frac{h}{\alpha h_1^2+h^2}\leq\frac{h_1}{C_1(\alpha h_1^2+h^2)}\leq\frac{1}{C_1h_1}\frac{h_1^2+\frac{1}{\alpha}h^2}{\alpha h_1^2+h^2}=\frac{1}{\alpha C_1h_1}.
		\end{equation}
		Combining \eqref{special estimate in case 3} and \eqref{eq:l3}, we deduce that
		\begin{eqnarray*}
			0&\geq&\frac{p-1}{h}+\frac{f_1}{fh_1}+\frac{1-\alpha}{h}-(2-\alpha)\frac{h}{\alpha
				h_1^2+h^2}-\frac{\alpha (n-1)}{h}+\alpha\sum\limits_i w^{ii}\\
			&\geq&\frac{p-n\alpha}{h}+\left(\frac{f_1}{fh_1}+\frac
			{\alpha}{2}\sum\limits_i w^{ii}\right)+ \left(\frac{\alpha}{2}\sum\limits_i w^{ii}-\frac{2-\alpha}{\alpha C_1h_1}\right),
		\end{eqnarray*}
		which will lead to a contradiction if 
		\begin{equation}\label{contradiction in 2A}
			\left(\frac{f_1}{fh_1}+\frac
			{\alpha}{2}\sum\limits_i w^{ii}\right)+ \left(\frac{\alpha}{2}\sum\limits_i w^{ii}-\frac{2-\alpha}{\alpha C_1h_1}\right)>0.
		\end{equation}
		Indeed, \eqref{eq:hy} tells us that $w_{ij}=h_{ij}+h\delta_{ij}$
		may be diagonal at $u_0$,
		\begin{equation}\label{w}
			\sum\limits_i w^{ii}\geq (n-1)\det(w_{ij})^{-\frac
				1{n-1}}=(n-1)(h^{p-1}(h^2+h_1^2)^{\frac {n-q}2}f)^{-\frac 1{n-1}}.   
		\end{equation}
		Then \eqref{choose C1}, \eqref{w} and the fact $\frac{h_1}{C_1}\geq h\geq M$
		give that
		\begin{align*}
			\frac{f_1}{fh_1}+\frac {\alpha}{2}\sum\limits_i w^{ii}&\geq
			-\frac{|\nabla f|}{fh_1}+\frac {\alpha(n-1)}{2}(h^{p-1}(h^2+h_1^2)^{\frac {n-q}2}f)^{-\frac 1{n-1}}\\
			&\geq\frac{1}{h_1}\left(\frac{-|\nabla f|}{f}+\frac{\alpha(n-1)}{2}h_1h^{-\frac{p-1}{n-1}}\left(1+\frac{1}{C_1^2}\right)^{-\frac{n-q}{2(n-1)}}h_1^{-\frac{n-q}{n-1}}f^{-\frac{1}{n-1}}\right)\\
			&\geq \frac{1}{h_1}\left(-\frac{|\nabla f|}{f}+\frac{\alpha(n-1)}{2}C_1^{\frac{q-1}{n-1}}\left(1+\frac{1}{C_1^2}\right)^{-\frac{n-q}{2(n-1)}}M^{\frac{q-p}{n-1}}f^{-\frac{1}{n-1}}\right)>0
		\end{align*}
		provided $1<q\leq n$ and $p<q$.
		On the other hand, by \eqref{choose C1} we have
		\begin{align*}
			\frac {\alpha}{2}\sum\limits_i w^{ii}-\frac{2-\alpha}{\alpha C_1h_1}&\geq \frac{1}{h_1}\left(\frac
			{\alpha(n-1)}{2}h_1\left(h^{p-1}(h^2+h_1^2)^{\frac {n-q}2}f\right)^{-\frac 1{n-1}}-\frac{2-\alpha}{C_1\alpha}\right)\\
			&\geq \frac{1}{h_1}\left(\frac{\alpha(n-1)}{2}h_1h^{-\frac{p-1}{n-1}}\left(1+\frac{1}{C_1^2}\right)^{-\frac{n-q}{2(n-1)}}h_1^{-\frac{n-q}{n-1}}f^{-\frac{1}{n-1}}-\frac{2-\alpha}{C_1\alpha}\right)\\
			&\geq\frac{1}{h_1}\left(\frac{\alpha(n-1)}{2}C_1^{\frac{q-1}{n-1}}\left(1+\frac{1}{C_1^2}\right)^{-\frac{n-q}{2(n-1)}}M^{\frac{q-p}{n-1}}f^{-\frac{1}{n-1}}-\frac{2-\alpha}{C_1\alpha}\right)\\
			&>0.
		\end{align*}
		The above two inequalities imply \eqref{contradiction in 2A}.
		
		Subcase$(2_B)$: $C_2h^{1-\frac{q-p}{n-1}}\leq h_1\leq C_1h$ where $C_2$ is chosen
		such that
		\begin{equation}\label{sd}
			\frac{\alpha(n-1)}{2}\left(1+C_1^2\right)^{-\frac{n-q}{2(n-1)}}C_2>\max\frac{|\nabla f|}{f^{1-\frac{1}{n-1}}}.
		\end{equation}
		Furthermore, we now choose $M\gg C_1\gg1$ such that
		\begin{equation}\label{bnm}
			\frac{\alpha(n-1)}{2}\left(1+C_1^2\right)^{-\frac{n-q}{2(n-1)}}M^{\frac{q-p}{n-1}}(\max f)^{-\frac{1}{n-1}}>2.
		\end{equation}
		From \eqref{eq:l3}, we get
		\begin{eqnarray*}
			0&\geq&\frac{p-1}{h}+\frac{f_1}{fh_1}+\frac{1-\alpha}{h}-(2-\alpha)\frac{h}{\alpha
				h_1^2+h^2}-\frac{\alpha (n-1)}{h}+\alpha\sum\limits_i w^{ii}\\
			&\geq&\frac{p-n\alpha}{h}+\left(\frac{f_1}{fh_1}+\frac
			{\alpha}{2}\sum\limits_i w^{ii}\right)+ \left(\frac
			{\alpha}{2}\sum\limits_i w^{ii}-\frac{2}{h}\right),
		\end{eqnarray*}
		which leads to a contradiction provided that
		\begin{eqnarray}\label{eq:xulu}
			\left(\frac{f_1}{fh_1}+\frac {\alpha}{2}\sum\limits_i
			w^{ii}\right)+ \left(\frac {\alpha}{2}\sum\limits_i
			w^{ii}-\frac{2}{h}\right)>0.
		\end{eqnarray}
		Then combining \eqref{w}, \eqref{sd} and $C_2h^{1-\frac{q-p}{n-1}}\leq h_1\leq C_1h$
		\begin{align*}
			\frac{f_1}{fh_1}+\frac {\alpha}{2}\sum\limits_i w^{ii}&\geq
			-\frac{|\nabla f|}{fh_1}+\frac {\alpha(n-1)}{2}(h^{p-1}(h^2+h_1^2)^{\frac {n-q}2}f)^{-\frac 1{n-1}}\\
			&\geq\frac{1}{h_1}\left(-\frac{|\nabla f|}{f}+\frac{\alpha(n-1)}{2}\left(1+C_1^2\right)^{-\frac{n-q}{2(n-1)}}h_1h^{-\frac{n-q+p-1}{n-1}}f^{-\frac{1}{n-1}}\right)\\
			&\geq\frac{1}{h_1}\left(-\frac{|\nabla f|}{f}+\frac{\alpha(n-1)}{2}\left(1+C_1^2\right)^{-\frac{n-q}{2(n-1)}}C_2f^{-\frac{1}{n-1}}\right)>0
		\end{align*}
		provided $p<q$ and $q\leq n$.
		On the other hand, by \eqref{bnm} and $h_1\leq C_1h$, we have
		\begin{align*}
			\frac {\alpha}{2}\sum\limits_i w^{ii}-\frac{2}{h}&\geq \frac
			{\alpha(n-1)}{2}\left(h^{p-1}(h^2+h_1^2)^{\frac {n-q}2}f\right)^{-\frac 1{n-1}}-\frac{2}{h}\\
			&\geq\frac{1}{h}\left(\frac{\alpha(n-1)}{2}\left(1+C_1^2\right)^{-\frac{n-q}{2(n-1)}}h^{1-\frac{n-q+p-1}{n-1}}f^{-\frac{1}{n-1}}-2\right)\\
			&\geq\frac{1}{h}\left(\frac{\alpha(n-1)}{2}\left(1+C_1^2\right)^{-\frac{n-q}{2(n-1)}}M^{\frac{q-p}{n-1}}f^{-\frac{1}{n-1}}-2\right)>0
		\end{align*}
		provided $p<q$ and $q\leq n$.  The above inequalities directly deduce \eqref{eq:xulu}.
		
		Subcase$(2_C)$: $h_1\leq C_2h^{1-\frac{q-p}{n-1}}, $ then
		\begin{equation}
			\frac{h_1}{h^\alpha}\leq C_2\frac{h^{1-\alpha}}{h^{\frac{q-p}{n-1}}} \leq
			\frac{C_2}{M^{\frac{q-p}{n-1}}}h^{1-\alpha}.
		\end{equation}
		So we get
		\begin{equation}\label{mnbj}
			\frac{|\nabla h|}{h^\alpha}\leq \frac{C_2}{M^{\frac{q-p}{n-1}}}h^{1-\alpha}, \quad\text { for any }x\in \sn.
		\end{equation}
		Let $\Gamma$ be a great circle connecting $x_1$ and $x_2$ with
		length less than or equal to $\pi,$ here $h(x_1)=\min h$ and
		$h(x_2)=\max h.$ Then
		\begin{equation}\label{ux}
			h(x_2)^{1-\alpha}-h(x_1)^{1-\alpha}=\int\limits_\Gamma
			dh^{1-\alpha}\leq (1-\alpha)\int\limits_\Gamma\frac{|\nabla
				h|}{h^\alpha}\leq  \frac{C_2(1-\alpha)\pi}{M^{\frac{q-p}{n-1}}}h(x_2)^{1-\alpha}.
		\end{equation}
		From equation \eqref{MA equation in introduction}, we have for $p<q$
		\begin{eqnarray*}
			h(x_1)=\min h\leq (\max f)^{\frac{1}{q-p}}.
		\end{eqnarray*}
		This with \eqref{ux} imply the upper bound of $h$ provided that
		\begin{equation}
			\label{choose C2}
			\frac{C_2(1-\alpha)\pi}{M^{\frac{q-p}{n-1}}}\leq \frac 1 2.
		\end{equation}
		So we get our desired upper bound for $\frac{h_1}{h^\alpha}$ from
		\eqref{mnbj}.
		
		Finally, we would like to claim that our requirements for the constants $C_1$, $C_2$ and $M$ are not contradictory in Case 2, i.e., \eqref{choose C1} to \eqref{bnm}, and \eqref{sd} to \eqref{choose C2}. 
		
		For \eqref{choose C1} to \eqref{bnm}, we first ask that, for any $C_1>0$, choose $M$ large so that $\eqref{bnm}$ holds. Then,
		\begin{align*}
			\text{left-hand side of \eqref{choose C1}}&=\frac{\alpha(n-1)}{2}C_1^{\frac{q-1}{n-1}}\left(1+\frac{1}{C_1^2}\right)^{-\frac{n-q}{2(n-1)}}M^{\frac{q-p}{n-1}}(\max f)^{-\frac{1}{n-1}}\\
			&=\frac{\alpha(n-1)}{2}C_1\left(1+C_1^2\right)^{-\frac{n-q}{2(n-1)}}M^{\frac{q-p}{n-1}}(\max f)^{-\frac{1}{n-1}}\\
			&=C_1\cdot\text{left-hand side of \eqref{bnm}}.
		\end{align*}
		Thus, we only need to choose $C_1$ large enough, so that $\eqref{choose C1}$ is established.
		
		For \eqref{sd}, \eqref{choose C2}, we ask that, for any $C_1>0$, choose $C_2$ large enough, so that $\eqref{sd}$ holds. By choosing $M$ large again, \eqref{choose C2} is true.
		We finish the proof.
	\end{proof}
	
	Lemma~\ref{lem} implies the upper bound of $h$.
	\begin{lem}\label{lem5.2}(Upper bound of Theorem \ref{C0 estimate in introduction})\label{lem1}
		Assume that $h$ is a solution to equation \eqref{MA equation in introduction} for $p\in(0,1]$ and $q\in(1,n],$ then there exists constant $C(f,n,p,q)$ such that
		\begin{equation}\label{SB}
			\sup\limits_{\sn} h\leq C.
		\end{equation}
	\end{lem}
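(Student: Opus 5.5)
The plan is to combine the gradient bound of Lemma~\ref{lem} with a lower bound for $\min h$, and then integrate along a great circle, as was already done in Subcase $(2_C)$.

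\textbf{Step 1: bound $\min h$.} Let $x_1$ be a point with $h(x_1)=\min_{\sn} h$. At $x_1$ we have $\nabla h=0$ and $\nabla^2 h\geq 0$. Hence $\det(\nabla^2 h+hI)\geq h^{n-1}$ there, and equation \eqref{MA equation in introduction} gives
\[
f(x_1)\geq h(x_1)^{1-p}\,h(x_1)^{q-n}\,h(x_1)^{n-1}=h(x_1)^{q-p}.
\]
Since $q>p$, this yields $\min h\leq (\max f)^{\frac{1}{q-p}}$.

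\textbf{Step 2: invoke Lemma~\ref{lem}.} By Lemma~\ref{lem} (which applies since $p\in(0,1]\subset(0,q)$ and $q\in(1,n]$), there is some $\alpha\in(0,1)$ with $|\nabla h|\leq C h^{\alpha}$ on $\sn$. Here $C$ depends only on $f,n,p,q$, because $\alpha$ is fixed in terms of $p$ and $n$ (e.g.\ $\alpha\le p/(n+3)$).

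\textbf{Step 3: integrate along a great circle.} Let $x_2$ be a point with $h(x_2)=\max_{\sn}h$, and join $x_1$ to $x_2$ by a great-circle arc $\Gamma$ of length at most $\pi$. Then
\[
h(x_2)^{1-\alpha}-h(x_1)^{1-\alpha}\leq (1-\alpha)\int_\Gamma \frac{|\nabla h|}{h^\alpha}\leq (1-\alpha)\pi C.
\]
Combining this with Step 1 gives
\[
\max h\leq\Big((\max f)^{\frac{1-\alpha}{q-p}}+\pi C\Big)^{\frac{1}{1-\alpha}},
\]
which is \eqref{SB}.

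\textbf{Main obstacle.} The delicate point is only bookkeeping. The constant in Lemma~\ref{lem} depends on $\alpha$, so $\alpha$ must be fixed once and for all in terms of $p$ and $n$ so that the final constant depends only on $(f,n,p,q)$. One should also note that $h>0$ along $\Gamma$, so that $h^{1-\alpha}$ is differentiable there; this holds because $K\in\kno$. The heavy lifting was already done in the case analysis of Lemma~\ref{lem}.
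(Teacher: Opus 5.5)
Your proposal is correct and follows the paper's proof essentially verbatim. You bound $\min h\le(\max f)^{1/(q-p)}$ from the equation at a minimum point, and then integrate $h^{1-\alpha}$ along a great-circle arc of length at most $\pi$ using the gradient estimate $|\nabla h|\le Ch^{\alpha}$. Your version adds the explicit computation $f(x_1)\ge h(x_1)^{q-p}$ and the check $(0,1]\subset(0,q)$, both of which the paper leaves implicit.
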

	\begin{proof}
		Let $\Gamma$ be a great circle connecting $x_1$ and $x_2$ with
		length less than or equal to $\pi,$ where $h(x_1)=\min h$ and
		$h(x_2)=\max h.$ Then by \eqref{gdfg} in Lemma~\ref{lem},
		\begin{equation*}
			h(x_2)^{1-\alpha}-h(x_1)^{1-\alpha}=\int\limits_\Gamma
			dh^{1-\alpha}\leq (1-\alpha)\int\limits_\Gamma\frac{|\nabla
				h|}{h^\alpha}\leq C.
		\end{equation*}
		On the other hand, from the equation \eqref{MA equation in introduction}, we have $h_{ij}\geq 0$ at $x_1$ and
		\begin{eqnarray*}
			h(x_1)=\min h\leq (\max f)^{\frac{1}{q-p}}.
		\end{eqnarray*}
		Thus
		$$
		h(x_2)\leq C.
		$$
	\end{proof}
	We next use the following trick to get the lower bound of volume.
	\begin{lem}(Lower bound of volume in Theorem \ref{C0 estimate in introduction})
		Suppose $h$ is a solution to the equation \eqref{MA equation in introduction} when $p\in(0,1]$ and $q\in(1,n]$, then there exists constant $C(f,n,p,q)$ such that
		\[
		V_n([h])>C.
		\]
	\end{lem}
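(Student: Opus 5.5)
We integrate the equation against spherical Lebesgue measure and use the upper bound on $h$ from Lemma~\ref{lem1}. Multiply \eqref{MA equation in introduction} by $h^{p}(h^2+|\nabla h|^2)^{\frac{n-q}{2}}$. This gives
\begin{equation*}
h\det(\nabla^2h+hI)=f\,h^{p}(h^2+|\nabla h|^2)^{\frac{n-q}{2}}.
\end{equation*}
The left-hand side integrates over $\sn$ to $nV_n([h])$. For the right-hand side we want a positive lower bound, using $f\geq\lambda^{-1}$. Since $q\leq n$, the exponent $\frac{n-q}{2}\ge0$, so $(h^2+|\nabla h|^2)^{\frac{n-q}{2}}\geq h^{n-q}$. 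Hence
\begin{equation*}
nV_n([h])\geq \lambda^{-1}\int_{\sn}h^{p+n-q}\,d\mathcal H^{n-1}.
\end{equation*}
The problem is therefore reduced to a lower bound on $\int h^{p+n-q}$.

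This is not immediate when $p+n-q>0$, because $h$ could be small on much of the sphere. The plan is to use the other consequence of the equation at the maximum point. At $x_2$, where $h(x_2)=\max h$, we have $\nabla h=0$ and $\nabla^2 h\le 0$, so $\det(\nabla^2h+hI)\le h^{n-1}$. Plugging into \eqref{MA equation in introduction} gives
\begin{equation*}
f(x_2)\le h^{1-p}h^{q-n}h^{n-1}=h(x_2)^{q-p}.
\end{equation*}
Since $q>p$, this yields $\max h\geq (\min f)^{\frac{1}{q-p}}=:c_0>0$.

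Next we turn the lower bound on $\max h$ into a lower bound on the integral. Let $R=\max h\le C$, and let $x_0\in K$ with $|x_0|=R$ be attained in direction $x_2$. Since $0\in K$, the segment $[0,x_0]\subset K$. Then $h(u)\ge (x_0\cdot u)_+=R(x_2\cdot u)_+$, so
\begin{equation*}
\int h^{p+n-q}\ge R^{p+n-q}\int_{\sn}(x_2\cdot u)_+^{p+n-q}\,du\ge c_0^{p+n-q}c(n,p,q)>0.
\end{equation*}
This holds because $p+n-q>0$ (as $p>0$, $q\le n$). If instead one prefers a direct geometric bound, the same computation works. Together with the display above, it gives $V_n([h])\ge C>0$.

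The only delicate point is the direction of the inequalities: $q\le n$ is needed to drop $|\nabla h|^2$, and $p<q$ is needed for the lower bound on $\max h$. The upper bound from Lemma~\ref{lem1} is in fact not needed for this argument. I expect no serious obstacle beyond checking these signs.
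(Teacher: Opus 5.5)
Your proof is correct, and its second half is the same as the paper's. Both use the bound $\max h\ge(\min f)^{1/(q-p)}$, obtained from $\nabla h=0$ and $\nabla^2h\le0$ at the maximum point. Both then use $h\ge h_{\max}\langle x,u_0\rangle_+$ to bound an integral of a power of $h$ from below.

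The difference is in how $V_n([h])$ is linked to such an integral. The paper passes through the dual quermassintegral:
\begin{itemize}
\item Hölder's inequality gives $V_n(K)\ge\frac1n\,\omega_{n-1}^{(q-n)/q}\bigl(\int_{S^{n-1}}\rho_K^q\bigr)^{n/q}$; this is where it uses $q\le n$.
\item The change of variables by the radial Gauss map, with Jacobian $h\det(\nabla^2h+hI)(h^2+|\nabla h|^2)^{-n/2}$, converts the equation into $\int_{S^{n-1}}\rho_K^q=\int_{S^{n-1}}h^pf$.
\item It is then left to bound $\int h^p$ from below.
\end{itemize}

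You instead start from $nV_n([h])=\int_{S^{n-1}}h\det(\nabla^2h+hI)$, substitute the equation, and discard $|\nabla h|^2$ using $\frac{n-q}{2}\ge0$. That pointwise step is really $\rho_K(\alpha_K^{-1}(x))=(h^2+|\nabla h|^2)^{1/2}\ge h(x)$, used in place of Hölder. It leaves you with $\int h^{p+n-q}f$, which is fine since $p+n-q>0$.

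Your route is shorter and more elementary: no radial Gauss map, no Hölder, and an explicit constant. The paper's route also gives a lower bound on $\int\rho_K^q$, i.e.\ on $\tilde V_q(K)$, which is the natural quantity in the dual theory. As you note, neither argument uses the upper bound on $h$.
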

	\begin{proof}
		Let $K=[h]$. Then by H\"older's inequality,
		\begin{align}\label{using holder inequality}
			V_n(K)=\frac{1}{n}\int_{\sn}\rho_K^n\geq\frac{1}{n}(\omega_{n-1})^{\frac{q-n}{q}}\left(\int_{\sn}\rho_K^q\right)^{\frac{n}{q}}
		\end{align}
		provided $0<q\leq n$. Since $h$ is a solution to equation \eqref{MA equation in introduction}, then
		\begin{equation}\label{from radial to support}
			\begin{split}\int_{\alpha^{-1}_K(\eta)}\rho^q(\xi)d\xi&=\int_{\eta}\rho^q(\alpha_K^{-1}(x))\text{Jac}(\alpha_K^{-1})dx\\
				&=\int_{\eta}\rho^{q-n}(\alpha_K^{-1}(x))h(x)\det(h_{ij}+h\delta_{ij})dx\\
				&=\int_{\eta}h^{p}(x)f(x)dx
			\end{split}
		\end{equation}
		for any $\eta\subset\sn$. By the definition of support function,  
		\begin{equation*}
			h(x)\geq h_{\max}\langle x,u_0\rangle_+
		\end{equation*}
		with $h(u_0)=h_{\max}$. Moreover,  we have $h_{ij}\leq0$ at $u_0$. From the equation \eqref{MA equation in introduction}, we have
		\begin{equation*} 
			h(u_0)^{n-1}\geq f(u_0)h(u_0)^{p-1+n-q},  
		\end{equation*}
		which means 
		\begin{equation}
			\label{hmax has lower bound}
			h_{\max}\geq C
		\end{equation} 
		when $p<q$.
		From \eqref{from radial to support}, we have
		\begin{equation}\label{integral of rhoq is bounded}
			\begin{split}\int_{\sn}\rho^q(\xi)d\xi&=\int_{\sn}h^p(x)f(x)dx\\
				&\geq(\min_{\sn} f)\int_{\sn}h^p(x)dx\\
				&\geq (\min_{\sn} f)h_{\max}^p\int_{\sn}\langle x,u_0\rangle_+^pdx\\
				&\geq C,
			\end{split}
		\end{equation}
		where the last inequality is because \eqref{hmax has lower bound}, and the integral $\int_{\sn}\langle x,u_0\rangle_+^pdx$ is positive constant. Combining \eqref{using holder inequality} and \eqref{integral of rhoq is bounded}, the volume of $K$ is bounded from below.
	\end{proof}
	
	\section{Uniqueness near the ball}\label{section uniqueness}
	After obtaining $C^0$ estimates of the form given in Section \ref{a priori estimate}, we apply the method in B\"or\"oczky-Chen-Liu-Saroglou \cite{BCLS-1} to derive the uniqueness result Theorem \ref{uniqueness result}. For the sake of completeness, we repeat some lemmas and steps from \cite{BCLS-1}. 
	
	\begin{lem}(BCLS\cite{BCLS-1})
		\label{lemma 7.1 in BCLS}
		Let $\lambda>1$, $p\in(-1,1)$, $q_m>1$ tend to $n$, and $K_m\in\kno$ tend to $K_\infty\in\kno$ as $m$ tends to infinity where $\lambda^{-1}\mathcal{H}^{n-1}\leq\tilde{C}_{p,q_m,K_m}\leq\lambda\mathcal{H}^{n-1}$. Then $\tilde{C}_{p,q_m,K_m}$ tends weakly to $S_{p,K_\infty}$.
	\end{lem}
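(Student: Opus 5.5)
We will prove weak convergence by testing $\tilde{C}_{p,q_m,K_m}$ against an arbitrary $g\in C(\sn)$ and passing to the limit in the integral representation of the $L_p$ dual curvature measure.

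Recall that for $K\in\kno$ and Borel $\eta\subset\sn$,
\begin{equation*}
\tilde{C}_{p,q,K}(\eta)=\frac1n\int_{\alpha_K^{*}(\eta)} h_K(\alpha_K(u))^{-p}\rho_K(u)^{q}\,du ,
\end{equation*}
where $\alpha_K^{*}(\eta)=\alpha_K^{-1}(\eta)$ is the reverse radial Gauss image. Equivalently, for continuous $g$,
\begin{equation*}
\int_{\sn} g\,d\tilde{C}_{p,q,K}=\frac1n\int_{\sn} g(\alpha_K(u))\,h_K(\alpha_K(u))^{-p}\rho_K(u)^{q}\,du .
\end{equation*}
This formula makes sense because $\alpha_K$ is defined for almost every $u$. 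For $q=n$, the change of variables $x=\rho_K(u)u\in\partial K$, with $du=\rho_K^{-n}\,x\cdot\nu_K(x)\,d\mathcal{H}^{n-1}(x)$, turns the right-hand side into
\begin{equation*}
\frac1n\int_{\partial K} g(\nu_K)\,h_K(\nu_K)^{1-p}\,d\mathcal{H}^{n-1}=\frac1n\int_{\sn} g\,dS_{p,K},
\end{equation*}
with the paper's normalization of $S_{p,K}$. So $\tilde{C}_{p,n,K}=S_{p,K}$.

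The proof then has three steps.

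\textbf{Step 1.} We split
\begin{equation*}
\int g\,d\tilde{C}_{p,q_m,K_m}-\int g\,dS_{p,K_\infty}=\Big(\int g\,d\tilde{C}_{p,q_m,K_m}-\int g\,d\tilde{C}_{p,n,K_m}\Big)+\Big(\int g\,dS_{p,K_m}-\int g\,dS_{p,K_\infty}\Big).
\end{equation*}

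\textbf{Step 2 (second bracket).} Since $K_m\to K_\infty\in\kno$, the bodies $K_m$ eventually contain a fixed ball $rB$ and lie in $RB$. Hence $r\le h_{K_m},\rho_{K_m}\le R$. We know $S_{K_m}\to S_{K_\infty}$ weakly, and $h_{K_m}^{1-p}\to h_{K_\infty}^{1-p}$ uniformly because $1-p>0$ and $h\ge r$. Together these give $S_{p,K_m}\to S_{p,K_\infty}$ weakly.

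\textbf{Step 3 (first bracket).} In the integral representation, the integrands for $q_m$ and for $n$ differ only by the factor $\rho_{K_m}(u)^{q_m}$ versus $\rho_{K_m}(u)^{n}$. We have
\begin{equation*}
|\rho^{q_m}-\rho^{n}|\le |q_m-n|\,\max(R^{n},r^{q_m})\,|\log R+\log(1/r)|,
\end{equation*}
and $|g|\,h^{-p}\le \|g\|_\infty \max(r^{-p},R^{-p})$. So the first bracket is bounded by
\begin{equation*}
C(r,R,p,n)\,\|g\|_\infty\,|q_m-n|\,\omega_{n-1}\to0 .
\end{equation*}
This uses only the two-sided bounds on $h$ and $\rho$, not the measure bounds by $\lambda$. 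The hypothesis $\lambda^{-1}\mathcal{H}^{n-1}\le\tilde{C}\le\lambda\mathcal{H}^{n-1}$ is not needed for convergence; it only guarantees that the limit $S_{p,K_\infty}$ is comparable to $\mathcal{H}^{n-1}$.

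The main point requiring care is the integral representation with the reverse radial Gauss map when $K_m$ is nonsmooth. This is handled by noting that $\alpha_K$ is defined almost everywhere and that the uniform bounds hold almost everywhere. No convergence of $\alpha_{K_m}$ is needed, because Step 3 compares measures of the same body $K_m$.
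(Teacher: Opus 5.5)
The paper gives no proof of this lemma; it is quoted from \cite{BCLS-1}, so there is no in-paper argument to compare with. Your proof is correct for the statement as written, reading $\kno$ as in Section \ref{Preliminaries} (origin in the interior). Then eventually $rB\subseteq K_m\subseteq RB$. For a fixed body $K_m$, the radial integrands of $\tilde C_{p,q_m,K_m}$ and $\tilde C_{p,n,K_m}$ differ by $h^{-p}|\rho^{q_m}-\rho^n|=O(|q_m-n|)$, uniformly in $m$. Finally, $h_{K_m}^{1-p}S_{K_m}\to h_{K_\infty}^{1-p}S_{K_\infty}$ weakly. Two cosmetic fixes:
\begin{enumerate}
\item In Step 3, arrange $r\le 1\le R$ and $q_m\le n+1$, and use a constant such as $R^{n+1}\log(R/r)$. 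As written, $\max(R^n,r^{q_m})$ and $|\log R+\log(1/r)|$ do not bound $\rho^{\xi}$ and $|\log\rho|$ in every case.
\item $\tilde C_{p,n,K}=S_{p,K}$ is a normalization convention. With the $\frac1n$ in the definition you get $\frac1n h_K^{1-p}S_K$, which is consistent with \eqref{MA equation in introduction} dropping the $\frac1n$.
\end{enumerate}

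You are right that $\lambda$, and likewise $p\in(-1,1)$ and $q_m>1$, are idle in your argument. That shows your proof rests entirely on the uniform bounds $h_{K_m},\rho_{K_m}\ge r>0$. Those hypotheses only matter when the limit merely contains $o$, possibly on $\partial K_\infty$. In that case $h_{K_m}^{-p}$ (for $p>0$) is unbounded near the normal cone of $K_\infty$ at $o$, and your Step 3 constant degenerates. One then has to use $\tilde C_{p,q_m,K_m}\le\lambda\mathcal{H}^{n-1}$ to show that this region carries uniformly small mass; presumably this is their role in \cite{BCLS-1}.

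This affects the use of the lemma in the proof of Theorem \ref{uniqueness result} (case $p\in(0,1)$, $q$ near $n$). There, compactness comes from Theorem \ref{C0 estimate in introduction}, which gives only $\max h_{K_m}\le C$ and $|K_m|\ge C^{-1}$. These bounds alone do not keep the origin in the interior of the limit. So your argument proves the lemma exactly as stated here. Applying it there would need either the stronger version or a separate proof that $o\in\operatorname{int}K_\infty$.
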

	Now we prove Theorem \ref{uniqueness result}. 
	\begin{proof}[Proof of Theorem \ref{uniqueness result}]
		We begin first with the case $p\in[1,q)$ and $q\in(1,n]$. Consider a family of operators $\Psi:C^{2,\alpha}_+(\sn)\rightarrow C^{\alpha}(\sn)$ defined by
		\begin{equation}
			\Psi(h):=h^{1-p}\left(h^2+|\nabla h|^2\right)^{\frac{q-n}{2}}\det(\nabla^2h+hI).
		\end{equation}
		It is easy to calculate that the linear operator of $\Psi$ at $h_0\equiv1$, denoted by $L_{h_0}:C^{2,\alpha}(\sn)\rightarrow C^\alpha(\sn)$ is
		\begin{equation*}
			L_{h_0}(\psi)=\Delta_{\sn}\psi+(q-p)\psi.
		\end{equation*}
		The spherical Laplacian $\Delta_{\sn}$ admits discrete eigenvalues 
		\begin{equation*}
			-\lambda_k=k^2+(n-2)k,\quad k=0,1,2,\cdots.
		\end{equation*}
		It is easy to find that $L_{h_0}$ is invertible, provided $p\in[1,q)$ and $q\in(1,n]$. Indeed, the eigenvalues $\tilde{\lambda}_k$ of $L_{h_0}$ are
		\begin{equation*}
			\tilde{\lambda}_k=-k^2-(n-2)k+q-p,\quad k=0,1,2,\cdots,
		\end{equation*}
		and $\tilde{\lambda}_0=q-p>0$, $\tilde{\lambda}_1=1-n+q-p<0$ and $\tilde{\lambda}_k<0$ when $k=2,3,\cdots$. By inverse function theorem, there exists a neighborhood $\mathcal{N}$ of $h_0$ such that, if $h_1,h_2\in\mathcal{N}$ and 
		\begin{equation*}
			\Psi(h_1)=\Psi(h_2),
		\end{equation*}
		then $h_1=h_2$. Due to this, to prove Theorem \ref{uniqueness result} it is sufficiently to prove that, there exists constant $\epsilon>0$ depending only on $n,\alpha, p$ and $q$ such that if the smooth function $f$ satisfies $\|f-1\|_{C^{\alpha}(\sn)}<\epsilon$, then for any convex body $K$ satisfying 
		\begin{equation*}
			d\tilde{C}_{p,q}(K,\cdot)=fd\mathcal{H}^{n-1},
		\end{equation*}
		then the support function $h_K\in\mathcal{N}$. Suppose not, then there exist a sequence function $f_i\rightarrow1$ in $C^{\alpha}(\sn)$ and a sequence of convex bodies $K_i$ such that
		\begin{equation*}
			d\tilde{C}_{p,q}(K_i,\cdot)=f_id\mathcal{H}^{n-1},
		\end{equation*}
		but $h_{K_m}\notin\mathcal{N}$ when $m$  is sufficiently large. By Theorem \ref{C0 estimate in introduction}, we have $\|h_{Km}\|_{L^{\infty}(\sn)}\leq C$ and $|K_m|\geq C^{-1}$ for some positive constant $C$. Thus, using Blaschke selection theorem, $K_m$ converges to a convex body $K_\infty$ up to a subsequence. By the weak convergence of $\tilde{C}_{p,q}(K,\cdot)$, then
		\begin{equation*}
			d\tilde{C}_{p,q}(K_{\infty})=\mathcal{H}^{n-1}.
		\end{equation*}
		By using the uniqueness result in \cite{CL21} when $p>1$ and $q\leq n$, $K_\infty$ must be the ball centered at origin.  That is to say,
		\begin{equation*}
			\|h_K-1\|_{L^{\infty}(\sn)}\rightarrow0\quad \text{as}\ m\rightarrow\infty.
		\end{equation*}
		By Caffarelli's $C^{2,\alpha}$ results \cite{Ca90,C90}, $\|h_{K_{m}}\|_{C^{2,\alpha}(\sn)}\leq C_0$ for some positive constant $C_0$. Thus,
		\begin{equation*}
			\|h_{K_m}^{p-1}(h_{K_m}^2+|\nabla h_{K_m}|^2)^{\frac{n-q}{2}}f_m-1\|_{C^{\alpha}(\sn)}<\epsilon_0\rightarrow0\ \text{as}\ \epsilon\rightarrow0.
		\end{equation*}
		Let $\omega=h_{K_m}-1$, $a_{ij}=\int_0^1U_t^{ij}dt$, where $U_{t}^{ij}$ is the cofactor matrix of 
		\begin{equation*}
			\nabla^2(1+t\omega)+(1+t\omega)I.
		\end{equation*}
		Then
		\begin{align*}
			h_{K_m}^{p-1}(h_{K_m}^2+|\nabla h_{K_m}|^2)^{\frac{n-q}{2}}f_m-1&=\det(\nabla^2h_{K_m}+h_{K_m}I)-1\\
			&=\sum_{i,j=1}^na_{ij}(\nabla^2_{ij}\omega+\omega\delta_{ij}).
		\end{align*}
		Note that $C^{-1}I\leq a_{ij}\leq CI$ for some $C>0$, thus $\omega$ satisfies a uniformly elliptic linear equation. By Schauder estimates, 
		\begin{align*}
			\|h_{K_m}-1\|_{C^{2,\alpha}}\leq C\left(\|h_{K_m}-1\|_{L^\infty}+\|h_{K_m}^{p-1}(h_{K_m}^2+|\nabla h_{K_m}|^2)^{\frac{n-q}{2}}f_m-1\|_{C^{\alpha}}\right)\rightarrow0
		\end{align*}
		as $m\rightarrow\infty$. This is a contradiction with $h_{K_m}\notin\mathcal{N}$. 
		
		As for the case when $n-\epsilon<q\leq n$ and $p\in(0,1)$, we need to use the Lemma \ref{lemma 7.1 in BCLS} and the uniqueness result in Brendle-Choi-Daskalopoulos \cite{BCD17}. One can also see these details in \cite{BCLS-1}.
		We complete the proof. 
	\end{proof}
	
	\section{The $L_p$ dual Brunn-Minkowski inequality}\label{section lp dual Minkowski inequality}
	\subsection{Case $p\geq1$}
	Recently, Sadovsky-Zhang \cite{SZ25} proved that Lutwak's conjecture holds in the symmetric case. In this subsection, we use the classical trick to prove the $L_p$ Brunn-Minkowski inequality when $p\geq 1$ which is stated in Theorem \ref{lp BMI no near ball}.
	
	We first states Sadovsky-Zhang's result in \cite{SZ25}.
	\begin{thm}[Sadovsky-Zhang \cite{SZ25}]
		Let $K,L$ be origin-symmetric convex bodies in $\rn$. Then for $0<q\leq n$,
		\begin{equation}
			\label{BMI}
			\tilde{V}_q((1-\lambda)K+\lambda L)^{\frac{1}{q}}\geq(1-\lambda)\tilde{V}_q(K)^{\frac{1}{q}}+\lambda\tilde{V}_q(L)^{\frac{1}{q}} \quad \forall~\lambda\in [0,1].
		\end{equation}
		with equality if $K$ and $L$ are dilates of each other.   
	\end{thm}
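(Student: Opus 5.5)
The plan is to work with the measure $\mu_q$ on $\rn$ with density $|x|^{q-n}$. Polar coordinates give
\[
\tilde{V}_q(K)=\frac1n\int_{\sn}\rho_K^q=\frac qn\,\mu_q(K),
\]
so \eqref{BMI} is a $\frac1q$-concavity statement for a rotation invariant measure whose density is $(q-n)$-homogeneous.

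\emph{Reduction to smooth bodies.} By Hausdorff approximation and continuity of $\tilde V_q$, it suffices to treat $K,L\in\mathcal{K}^2_{+,e}$. For such bodies the Minkowski combinations $K_\lambda=(1-\lambda)K+\lambda L$ stay in $\mathcal{K}^2_{+,e}$. Put $\Phi(\lambda)=\tilde V_q(K_\lambda)$. Concavity of $\Phi^{1/q}$ on $[0,1]$ then reduces to the pointwise differential inequality
\[
\Phi\Phi''\le\Bigl(1-\tfrac1q\Bigr)(\Phi')^2 .
\]
Since $K_\lambda$ is an arbitrary smooth symmetric body and $h_L-h_K$ an arbitrary even direction, this is the \emph{local} inequality: for every $K\in\mathcal{K}^2_{+,e}$ and every even $\varphi\in C^2(\sn)$,
\[
\tilde V_q(K)\,\frac{d^2}{dt^2}\Big|_{t=0}\tilde V_q(h_K+t\varphi)\le\Bigl(1-\tfrac1q\Bigr)\Bigl(\frac{d}{dt}\Big|_{t=0}\tilde V_q(h_K+t\varphi)\Bigr)^2 .
\]

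\emph{Computing the variations.} The first variation is the Huang–Lutwak–Yang–Zhang formula, $q\int_{\sn}\frac{\varphi}{h_K}\,d\tilde C_q(K,\cdot)$. For the second variation I would follow Kolesnikov–Milman and write it as a boundary integral over $\partial K$ against the weight $|x|^{q-n}$, together with the second fundamental form. By the weighted Reilly formula, the local inequality then becomes a Poincaré (spectral gap) inequality for the weighted Hilbert–Brunn–Minkowski operator $L_K$ attached to $\mu_q$ and $K$. The condition needed is that the first nontrivial eigenvalue of $-L_K$ on \emph{even} functions is at least a constant determined by homogeneity, namely the one corresponding to exponent $\frac1q$.

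\emph{Main obstacle: the even spectral gap.} Without symmetry the gap fails, and Borell's theory does not apply: the density $|x|^{q-n}$ is only $s$-concave with $s=-\frac1{n-q}<-\frac1n$. I would follow Kolesnikov–Livshyts and compare $L_K$ with the Neumann Laplacian of $\mu_q|_K$. The required bound would then follow from an estimate of Cordero-Erausquin–Rotem type for even functions under rotation invariant measures with a radially monotone density: for even $u$, the variance of $u$ is controlled by the Dirichlet energy with the sharp constant. Evenness is used exactly to discard the linear (translation) eigenfunctions that obstruct the gap.

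Once the local inequality holds at every $K_\lambda$, integrating in $\lambda$ gives \eqref{BMI}. If $L=sK$, then $\Phi(\lambda)^{1/q}$ is affine in $\lambda$ by $q$-homogeneity of $\tilde V_q$, which gives the stated equality case.
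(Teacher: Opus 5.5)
Your plan is essentially the route the paper points to. The paper gives no proof of this statement and simply cites Sadovsky--Zhang, describing their argument as resting on the Kolesnikov--Milman local (second-variation, Hilbert--Brunn--Minkowski operator) framework, the Kolesnikov--Livshyts comparison with the interior Neumann problem, and the Cordero-Erausquin--Rotem Poincar\'e estimate for even functions; your reduction to the local inequality for the $q$-homogeneous measure $|x|^{q-n}\,dx$ on smooth symmetric bodies, with the equality case from homogeneity, matches this. Like the paper, you import rather than prove the decisive even spectral-gap estimate for this weight (which is not $(-\frac1n)$-concave), so your sketch agrees with the cited proof but is not self-contained at that step.
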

	We are now in a position to prove the first item in Theorem \ref{lp BMI no near ball} by a classical trick.

	\begin{proof}[Proof of the first item in Theorem \ref{lp BMI no near ball}]
		Let $\bar{K}=\tilde{V}_q(K)^{-\frac{1}{q}}K$, $\bar{L}=\tilde{V}_q(L)^{-\frac{1}{q}}L$, and $\lambda=\frac{\tilde{V}_q(L)^{\frac{p}{q}}}{\tilde{V}_q(K)^{\frac{p}{q}}+\tilde{V}_q(L)^{\frac{p}{q}}}$. 
		By Jensen's inequality, for $p\geq1, \lambda\in[0,1]$,
		\begin{equation*}
			h_{(1-\lambda)K+_p\lambda L}=[(1-\lambda)h_K^p+\lambda h_L^p]^{\frac{1}{p}}\geq (1-\lambda)h_K+\lambda h_L=h_{(1-\lambda)K+\lambda L},
		\end{equation*}
		thus 
		\begin{equation}\label{sub}
			(1-\lambda)K+\lambda L\subseteq (1-\lambda)K+_p\lambda L.
		\end{equation}
		Combining with \eqref{BMI} and \eqref{sub},
		\begin{align*}
			\frac{\tilde{V}_q(K+_pL)^{\frac{1}{q}}}{(\tilde{V}_q(K)^{\frac{p}{q}}+\tilde{V}_q(L)^{\frac{p}{q}})^{\frac{1}{p}}}&=\tilde{V}_q\left(\frac{K+_pL}{(\tilde{V}_q(K)^{\frac{p}{q}}+\tilde{V}_q(L)^{\frac{p}{q}})^{\frac{1}{p}}}\right)^{\frac{1}{q}}\\
			&=\tilde{V}_q\left((1-\lambda)\bar{K}+_p\lambda\bar{L}\right)^{\frac{1}{q}}\\
			&\geq \tilde{V}_q\left((1-\lambda)\bar{K}+\lambda\bar{L}\right)^{\frac{1}{q}}\\
			&\geq (1-\lambda)\tilde{V}_q(\bar{K})^{\frac{1}{q}}+\lambda\tilde{V}_q(\bar{L})^{\frac{1}{q}}=1.
		\end{align*}
		Hence for $p\geq 1, q\in(0,n]$,
		$$\tilde{V}_q(K+_p L)^{\frac{p}{q}}\geq \tilde{V}_q(K)^{\frac{p}{q}}+\tilde{V}_q(L)^{\frac{p}{q}}.$$
		By the homogeneity of the above inequality, we immediately get
		$$\tilde{V}_q((1-\lambda)K+_p\lambda L)^{\frac{p}{q}}\geq(1-\lambda)\tilde{V}_q(K)^{\frac{p}{q}}+\lambda\tilde{V}_q(L)^{\frac{p}{q}}.$$
	\end{proof}
	\subsection{Case $p\in(0,1)$}
	In this subsection, our goal is to prove the near spherical $L_p$ dual Brunn-Minkowski inequality when $p\in(0,1)$ which is stated in Theorem \ref{lp BMI no near ball}. Inspired by the method in \cite{BLYZ12,CHLL20}, we first obtain the $L_p$ dual Minkowski inequality, which is Theorem \eqref{thm 10}. Then in Theorem \ref{lem1} we prove the equivalence between the $L_p$ Brunn-Minkowski inequality \eqref{E:15} and the $L_p$ Minkowski inequality \eqref{E:2} for dual quermassintegrals. Finally, we prove Theorem \ref{lp BMI no near ball}.
	
	We first state our result on the $L_p$ Minkowski inequality for dual quermassintegrals.
	\begin{thm}\label{thm 10}($L_p$ dual Minkowski inequality)
		For $n\geq2$, $\alpha\in(0,1)$, $p\in(0,1)$ and $q\in(p,n]$. There exists a constant $\epsilon_0\in(0,1)$ that depends only on $n, \alpha, p$ and $q$ such that, if $K,L\in\mathcal{K}_{+,e}^{2,\alpha}$ with $\|h_K-1\|_{C^{2,\alpha}}\leq \epsilon_0$, then 
		\begin{equation}\label{E:2}
			\left(\int_{\sn}\left(\frac{h_L}{h_K}\right)^pd\bar{C}_{q,K}\right)^{\frac{1}{p}}\geq\left(\frac{\tilde{V}_q(L)}{\tilde{V}_q(K)}\right)^{\frac{1}{q}},
		\end{equation}
		where $\bar{C}_{q,K}:=\frac{\tilde{C}_{q,K}}{\tilde{V}_q(K)}$.
	\end{thm}
	
	To prove the Theorem \ref{thm 10}, we first consider a minimization problem.
	\begin{lem}\label{lem7.5}
		The minimization problem
		\begin{equation*}
			\min_{\phi\in C_{+,e}^{2,\alpha}(\sn)}\Phi(\phi)
		\end{equation*}
		has a minimizer $h_{L_0}$, which is the support function of some $L_0\in\mathcal{K}_{+,e}^{2,\alpha}$. The functional $\Phi: C_{+,e}^{2,\alpha}(\sn)\rightarrow\mathbb{R}$ is defined by
		\begin{equation*}
			\Phi(\phi)=\frac{1}{\tilde{V}_q([\phi])^{\frac{p}{q}}}\int_{\mathbb{S}^{n-1}}\left(\frac{\phi}{h_K}\right)^pd\tilde{C}_{q,K}
		\end{equation*}
		for the given $K\in\mathcal{K}_{+,e}^{2,\alpha}$.
	\end{lem}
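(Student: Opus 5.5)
Since $\Phi$ is invariant under positive scaling of $\phi$ (numerator and denominator both homogeneous of degree $p$), I would minimize over the normalized class $\{\phi\in C^{2,\alpha}_{+,e}(\sn):\tilde V_q([\phi])=\tilde V_q(K)\}$.

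\textbf{Reduction to support functions.} First replace $\phi$ by $h_{[\phi]}$. Since $h_{[\phi]}\le\phi$ and $[h_{[\phi]}]=[\phi]$, this does not increase $\Phi$. So I would minimize over origin-symmetric convex bodies $L$ with $\tilde V_q(L)=\tilde V_q(K)$, using the functional
\[
\Phi(L)=\tilde V_q(L)^{-p/q}\int_{\sn}(h_L/h_K)^p\,d\tilde C_{q,K}.
\]

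\textbf{Compactness.} Take a minimizing sequence $L_i$. Because $K\in\mathcal K^{2,\alpha}_{+,e}$, the measure $\tilde C_{q,K}$ has a density bounded between two positive constants, and $h_K$ is bounded above and below. By symmetry, $h_{L_i}(x)\ge R_i|\langle x,u_i\rangle|$, where $R_i=\max h_{L_i}$. Hence
\[
\Phi(L_i)\ge c\,R_i^p\int_{\sn}|\langle x,u\rangle|^p\,dx ,
\]
and since $\Phi(L_i)$ is bounded (for example by $\Phi(K)$), $R_i$ is bounded. For the lower side, origin symmetry gives $\tilde V_q(L_i)\to0$ if $L_i$ degenerated (using $q>0$). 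That is excluded by the normalization. By Blaschke selection, $L_i\to L_0\in\kne$ with nonempty interior. Then $\tilde V_q$ is continuous and $h_{L_i}\to h_{L_0}$ uniformly, so $L_0$ is a minimizer among convex bodies.

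\textbf{Regularity (main obstacle).} It remains to show $h_{L_0}\in C^{2,\alpha}_{+}$, so that the minimizer lies in the admissible class. I would derive the Euler--Lagrange equation using the variational formula for dual quermassintegrals of Huang--Lutwak--Yang--Zhang. Perturb by Wulff shapes $[h_{L_0}e^{tg}]$ with $g$ even, differentiate at $t=0$, and use $\frac{d}{dt}\tilde V_q=q\int g\,d\tilde C_{q,L_0}$. This gives
\[
h_{L_0}^{p}h_K^{-p}\,d\tilde C_{q,K}=c\,d\tilde C_{q,L_0}
\]
in the even sense, with $c>0$. In density form,
\[
\det(\nabla^2h+hI)(h^2+|\nabla h|^2)^{\frac{q-n}{2}}h^{1-p}=c^{-1}h_K^{-p}\,\frac{d\tilde C_{q,K}}{d\mathcal H^{n-1}}=:F,
\]
with $F\in C^{\alpha}$ bounded between positive constants. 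This is equation \eqref{MA equation in introduction} with the same $p$ and $q$. Since $L_0$ is origin symmetric with nonempty interior, $h_{L_0}$ is bounded between positive constants. Caffarelli's interior $C^{2,\alpha}$ theory for Monge--Amp\`ere equations (as used in the proof of Theorem \ref{uniqueness result}) then first gives strict convexity and then $h_{L_0}\in C^{2,\alpha}$ with positive curvature. Thus $L_0\in\mathcal K^{2,\alpha}_{+,e}$.

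The delicate points are these. The variational formula must be justified for even perturbations of a merely convex $L_0$; the HLYZ formula holds for general bodies in $\kno$, so this should be fine. One also needs to confirm that restricting to even perturbations still gives the full measure identity, which holds because both measures are even.
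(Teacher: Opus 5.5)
Your argument is correct. Its existence half is the paper's proof: pass from $\phi$ to $h_{[\phi]}$, normalize $\tilde V_q$ by scale invariance, bound $\max h_{L_i}$ through $h_{L_i}\ge R_i|\langle x,u_i\rangle|$ and the lower bound on the density of $h_K^{-p}\tilde C_{q,K}$, and take a Blaschke limit. For nondegeneracy the paper uses H\"older's inequality (this is where $q\le n$ enters) to bound $V_n(L_j)$ from below, and then a symmetric-strip estimate for $\min h_{L_j}$. Your dominated-convergence argument ($\rho_{L_i}\to0$ off the hyperplane containing a degenerate symmetric limit) is equally valid and works for all $q>0$.

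The substantive difference is where regularity enters. The paper's proof of the lemma never shows $L_0\in\mathcal K^{2,\alpha}_{+,e}$. It asserts that $[\phi]\in\mathcal K^{2,\alpha}_{+,e}$ for $\phi\in C^{2,\alpha}_{+,e}(\sn)$, which is false in general: in the plane, $\phi(\theta)=1-\frac12\cos2\theta$ is smooth, even and positive, yet $\phi+\phi''<0$ near $\theta=\pm\frac{\pi}{2}$, so $[\phi]$ has corners. It then minimizes over $C^+_e(\sn)$, and only obtains regularity of the minimizer later, in the proof of Theorem \ref{thm 10}, via the same Euler--Lagrange identity and Caffarelli argument you use. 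Your route is what actually yields the statement as written: minimize over all of $\kne$ (legitimate because $\Phi(\phi)\ge\Phi(h_{[\phi]})$ for every positive continuous $\phi$), then prove regularity inside the lemma.

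When writing it up, make two points explicit. First, use $\phi_t=h_{L_0}e^{tg}$ itself as the competitor; minimality at $t=0$ then follows from $\Phi(\phi_t)\ge\Phi(h_{[\phi_t]})\ge\Phi(h_{L_0})$. Second, before invoking Caffarelli, note that $\tilde C_q(L_0,\cdot)$ and $S_{L_0}$ are mutually comparable, since $(x\cdot\nu_{L_0}(x))|x|^{q-n}$ is pinched on $\partial L_0$. So $S_{L_0}$ has a density pinched between positive constants, which gives strict convexity and $C^{1,\beta}$. After that, the gradient-dependent right-hand side is H\"older, and you bootstrap to $C^{2,\alpha}$.
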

	\begin{proof}
		Let $h_{[\phi]}$ be the support function of convex body $[\phi]$. Since $\phi\in C_{+,e}^{2,\alpha}(\sn)$, $[\phi]\in\mathcal{K}_{+,e}^{2,\alpha}$ and $h_{[\phi]}\in C_{+,e}^{2,\alpha}(\sn)$. Moreover,
		\begin{equation*}
			h_{[\phi]}\leq\phi, \tilde{V}_q([\phi])=\tilde{V}_q(h_{[\phi]})\Rightarrow\Phi(h_{[\phi]})\leq\Phi(\phi)
		\end{equation*}
		which means that the minimizer of $\Phi(h_{[\phi]})$ is actually the minimiser of $\Phi(\phi)$. Note that $\tilde{V}_q$ is homogeneous of degree $q$ which yields
		\begin{equation*}
			\Phi(t\phi)=\frac{1}{\tilde{V}_q([t\phi])^{\frac{p}{q}}}\int_{\mathbb{S}^{n-1}}\left(\frac{t\phi}{h_K}\right)^pd\tilde{C}_{q,K}=\Phi(\phi).
		\end{equation*}
		This fact allows us to add a limitation of $\tilde{V}_q(L)=1$. Therefore,
		\begin{align*}
			s:=&\min_{\phi\in C^+_e(\mathbb{S}^{n-1})}\Phi(\phi)\\
			=&\min_{h_L\in C^+_e(\mathbb{S}^{n-1})}\Phi(h_L)\\
			=&\min\left\{ \int_{\mathbb{S}^{n-1}}\left(\frac{h_L}{h_K}\right)^pd\tilde{C}_{q,K}: L\in \mathcal{K}_e^n, \tilde{V}_q(L)=1
			\right\}.
		\end{align*}
		
		Denote $\int_{\mathbb{S}^{n-1}}\left(\frac{h_L}{h_K}\right)^pd\tilde{C}_{q,K}$  as $F(L)$. Assume $L_j$ be a minimizing sequence such that $F(L_j)\rightarrow s$ as $j\rightarrow\infty$. Next, we need to prove that there exists a uniform constant $C>0$, such that for any $j$, $h_{L_j}\leq C$.
		
		We suppose to the contrary that, passing to a subsequence, $R_j:=\|h_{L_j}(e_j)\|_{L^{\infty}}\rightarrow\infty$ as $j\rightarrow\infty$. Since $K\in\mathcal{K}_e^n$, we have $h_K^{-p}d\tilde{C}_{q,K}=fd\mathcal{H}^{n-1}$ for some positive continuous function $f$. Therefore, 
		\begin{align*}
			F(L_j)=&\int_{\mathbb{S}^{n-1}}\left(\frac{h_{L_j}}{h_K}\right)^pd\tilde{C}_{q,K}
			=\int_{\mathbb{S}^{n-1}}h_{L_j}^pfd\mathcal{H}^{n-1}\\
			\geq&R_j^p\int_{\mathbb{S}^{n-1}}(x,e_j)_+^pfd\mathcal{H}^{n-1}
			\geq CR_j^p\rightarrow\infty
		\end{align*}
		as $j\rightarrow\infty$, which contradicts the fact that $F(L_j)\rightarrow s$ as $j\rightarrow\infty$. 
		
		By the Blaschke selection theorem, there exists a convex set $L_0\in\mathcal{K}^n$ and a subsequence $L_{j_k}$ such that $L_{j_k}\rightarrow L_0$ in Hausdorff distance. We next show that $L_0\in\kne$. Since $\tilde{V}_q(L_j)=1$, then the H\"older's inequality yields
		\begin{equation}\label{volume has lower bound}
			\begin{split}
				1=\tilde{V}_q(L_j)=&\frac{1}{n}\int_{\mathbb{S}^{n-1}}\rho_{L_j}^q(u)du\\
				\leq&\frac{1}{n}\left(\int_{\mathbb{S}^{n-1}}\rho_{L_j}^q(u)du\right)^{\frac{q}{n}}\left(\int_{\mathbb{S}^{n-1}}1du\right)^{1-\frac{q}{n}}\\
				=&n^{\frac{q}{n}-1}(\omega_{n-1})^{1-\frac{q}{n}}V_n(L_j)^{\frac{q}{n}}.
			\end{split}
		\end{equation}
		Hence, there exists a constant $C>0$ such that $V_n(L_j)\geq C$ for all $j$.
		Let $v_j$ be a direction where $h_{L_j}$ attains its minimum. The upper bound of $h_{L_j}$ shows that there exists $R>0$ such that $L_j\subseteq B_R$. Consider the symmetric strip 
		\begin{equation*}
			S_{L_j}:=\left\{x\in\mathbb{R}^n:|\langle x,v_j\rangle|\leq h_{L_j}(v_j)\right\}. 
		\end{equation*}
		It is obvious that $L_j\subseteq S_{L_j}\cap B_R$. We could deduce
		\begin{equation*}
			V_n(L_j)\leq V_n(S_{L_j}\cap B_R)=C_0R^{n-1}\min h_{L_j}
		\end{equation*}
		for some positive constant $C_0$. Combining this with $V_n(L_j)\geq C$, we obtain a uniform positive lower bound for $\min h_{L_j}$,
		\begin{equation*}
			\min h_{L_j}\geq \frac{C}{C_0R^{n-1}}>0.
		\end{equation*}
		Therefore, $L_0\in\kne$.
		Since $h_{L_{j_k}}\rightarrow h_{L_0}$ uniformly, we have $F(L_{j_k})\rightarrow F(L_0)$. Hence $F(L_0)=s$ and $L_0$ is the desired minimiser.   
	\end{proof}
	
	This lemma puts us in a position to prove Theorem \ref{thm 10}

	\begin{proof}[Proof of Theorem \ref{thm 10}]  By Lemma \ref{lem7.5}, $L_0$ is the minimiser.
		For any $f\in C^+_e(\mathbb{S}^{n-1})$, define
		\begin{equation*}
			q_t:=\log h_{L_0}(x)+tf(x)+o(t,x),~~~~~~~\text{for}~~~t\in(-\epsilon, \epsilon),
		\end{equation*}
		then $q_t\in C^+_e(\mathbb{S}^{n-1})$ for all $t\in(-\epsilon, \epsilon)$. Let $L_t$ be the Wulff shape associated with $q_t$, one has $L_t\in\mathcal{K}_e^n$ and $L_t$ is actually the minimiser $L_0$ when $t=0$.
		
		Since $h_{L_0}$ is the minimiser of $\Phi(\phi)$, then by the variational formula in \cite{HLYZ16}, we have
		\begin{align*}
			0=&\frac{d}{dt}\Big|_{t=0}\log\Phi(q_t)\\
			=&\frac{d}{dt}\Big|_{t=0}\log\left[\frac{1}{\tilde{V}_q([q_t])^{\frac{p}{q}}}\int_{\mathbb{S}^{n-1}}\left(\frac{h_{L_t}}{h_K}\right)^pd\tilde{C}_{q,K}\right]\\
			=&\frac{d}{dt}\Big|_{t=0}\left[-\frac{p}{q}\log\tilde{V}_q([q_t])+\log \left(\int_{\mathbb{S}^{n-1}}\left(\frac{h_{L_t}}{h_K}\right)^pd\tilde{C}_{q,K}\right)\right]\\
			=&\frac{p\int_{\mathbb{S}^{n-1}}\left(\frac{h_{L_0}}{h_K}\right)^{p-1}\frac{h_{L_0}}{h_K}fd\tilde{C}_{q,K}}{\int_{\mathbb{S}^{n-1}}\left(\frac{h_{L_0}}{h_K}\right)^pd\tilde{C}_{q,K}}-\frac{p\int_{\mathbb{S}^{n-1}} fd\tilde{C}_{q,L_0}}{\tilde{V}_q(L_0)}.
		\end{align*}
		From the arbitrariness of $f$, it follows that
		\begin{equation*}
			h_{K}^{-p}d\tilde{C}_{q,K}=\frac{\int_{\mathbb{S}^{n-1}}\left(\frac{h_{L_0}}{h_K}\right)^pd\tilde{C}_{q,K}}{\tilde{V}_q(L_0)}h_{L_0}^{-p}d\tilde{C}_{q,L_0}=h_{kL_0}^{-p}d\tilde{C}_{q,kL_0},
		\end{equation*}
		where the positive rescaling constant
		\begin{equation*}
			k=\left(\frac{\tilde{V}_q(L_0)}{\int_{\mathbb{S}^{n-1}}\left(\frac{h_{L_0}}{h_K}\right)^pd\tilde{C}_{q,K}}\right)^{\frac{1}{p-q}}.
		\end{equation*}
		
		Note that $K\in \mathcal{K}_{+,e}^{2,\alpha}$. By Caffarelli's regularity results \cite[Theorem 4]{C90}, we have $kL_0\in \mathcal{K}_{+,e}^{2,\alpha}$. Since $\|h_K-1\|_{C^{2,\alpha}}\leq\epsilon$, it means that
		\begin{equation*}
			\left\|\frac{h_K^{-p}d\tilde{C}_{q,K}}{d\mathcal{H}^{n-1}}-1\right\|_{C^\alpha(\sn)}<\epsilon_0
		\end{equation*}
		for some $\epsilon_0>0$. By the uniqueness result of $L_p$ dual Minkowski problem in \cite[Theorem 1.5]{BCLS-1}, it follows that $L_0=k^{-1}K\in \mathcal{K}_{+,e}^{2,\alpha}$.
		Therefore, for any 
		$L\in \mathcal{K}^n_e$, we have $h_L\in C^+_e(S^{n-1})$. Since $L_0=k^{-1}K$ is a minimizer of $\Phi$ and $\Phi$ is homogeneous of degree zero, we obtain
		\begin{equation*}
			\Phi(K)=\Phi(k^{-1}K)\leq\Phi(h_L),
		\end{equation*}
		and finally,
		\begin{equation*}
			\left(\int_{\sn}\left(\frac{h_L}{h_K}\right)^pd\bar{V}_{q,K}\right)^{\frac{1}{p}}\geq\left(\frac{\tilde{V}_q(L)}{\tilde{V}_q(K)}\right)^{\frac{1}{q}},
		\end{equation*}
		which is desired. 
	\end{proof}
	We finally prove the equivalence between the $L_p$
	Minkowski inequality and the $L_p$ Brunn–Minkowski inequality for the dual quermassintegral when $p,q>0$, and the method is from \cite{BLYZ12}. Notably, this equivalence holds without symmetry or smoothness assumptions.
	\begin{lem}\label{lem1}
		Suppose $p,q>0$. For $K,L\in\kno$, the $L_p$ Minkowski inequality \eqref{E:2} and the $L_p$ Brunn-Minkowski inequality \eqref{E:15} for the dual quermassintegral are equivalent.
	\end{lem}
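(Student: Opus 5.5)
We follow the scheme of \cite{BLYZ12}: we prove each direction separately. The two tools are the variational formula of \cite{HLYZ16} for the dual quermassintegral along Wulff shapes, and the elementary fact that $h_{[f]}\le f$ with equality $\tilde C_{q,[f]}$-a.e. (the dual curvature measure is concentrated on normals where the Wulff shape touches its defining function).

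\emph{Minkowski implies Brunn--Minkowski.} Fix $\lambda\in[0,1]$. Put $Q_\lambda=(1-\lambda)K+_p\lambda L$ and $h_\lambda=((1-\lambda)h_K^p+\lambda h_L^p)^{1/p}$. Then $Q_\lambda=[h_\lambda]$, and $h_{Q_\lambda}=h_\lambda$ holds $\tilde C_{q,Q_\lambda}$-a.e. Hence
\begin{equation*}
\tilde V_q(Q_\lambda)=\frac1n\int_{\sn} h_{Q_\lambda}^p h_{Q_\lambda}^{-p}\,d\tilde C_{q,Q_\lambda}
=(1-\lambda)\int_{\sn} h_K^ph_{Q_\lambda}^{-p}\,d\tilde C_{q,Q_\lambda}+\lambda\int_{\sn} h_L^ph_{Q_\lambda}^{-p}\,d\tilde C_{q,Q_\lambda},
\end{equation*}
up to the normalization $\tilde C_{q,Q}(\sn)=\tilde V_q(Q)$ used in \eqref{E:2}. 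We apply \eqref{E:2} with $Q_\lambda$ in the role of $K$ and with $K$, respectively $L$, in the role of $L$. Each integral, divided by $\tilde V_q(Q_\lambda)$, is then at least $(\tilde V_q(K)/\tilde V_q(Q_\lambda))^{p/q}$, respectively $(\tilde V_q(L)/\tilde V_q(Q_\lambda))^{p/q}$. Multiplying by $\tilde V_q(Q_\lambda)^{p/q}$ gives \eqref{E:15}.

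\emph{Brunn--Minkowski implies Minkowski.} We take $p$-combinations with $K$ as the base: $Q_t=(1-t)K+_p tL$ for $t\in[0,1]$. By \eqref{E:15}, the function $g(t)=\tilde V_q(Q_t)^{p/q}-(1-t)\tilde V_q(K)^{p/q}-t\tilde V_q(L)^{p/q}$ satisfies $g\ge0$ and $g(0)=0$, so $g'(0^+)\ge0$. To compute the derivative we write $\log h_t=\frac1p\log\big(h_K^p+t(h_L^p-h_K^p)\big)$, so that
\begin{equation*}
\frac{d}{dt}\Big|_{t=0}\log h_t=\frac{h_L^p-h_K^p}{p\,h_K^p}.
\end{equation*}
This is a smooth one-sided perturbation of $\log h_K$, and the variational formula of \cite{HLYZ16} (as in the proof of Theorem~\ref{thm 10}) applies. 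It gives
\begin{equation*}
\frac{d}{dt}\Big|_{t=0^+}\tilde V_q(Q_t)=\frac{q}{p}\int_{\sn}\frac{h_L^p-h_K^p}{h_K^p}\,d\tilde C_{q,K}.
\end{equation*}
Substituting into $g'(0^+)\ge0$ yields
\begin{equation*}
\tilde V_q(K)^{\frac pq-1}\int_{\sn}\Big(\frac{h_L}{h_K}\Big)^p d\tilde C_{q,K}-\tilde V_q(K)^{p/q}\ \ge\ \tilde V_q(L)^{p/q}-\tilde V_q(K)^{p/q}.
\end{equation*}
After cancelling the $\tilde V_q(K)^{p/q}$ terms this rearranges exactly to \eqref{E:2}, with the normalized measure $\bar C_{q,K}$.

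The main obstacle is justifying the variational formula at $t=0$. The perturbation $\log h_t$ is continuous in $(t,x)$ with a uniform $t$-derivative because $h_K,h_L>0$ on $\sn$; this positivity is where $K,L\in\kno$ is used. The formula of \cite{HLYZ16} is stated for a two-sided interval $t\in(-\delta,\delta)$, so we extend $h_t$ to small negative $t$ by the same expression, which stays positive for small $|t|$. The one-sided derivative then agrees with the two-sided one, and no symmetry or smoothness of $K$ or $L$ is needed.
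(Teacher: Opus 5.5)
Your proof is correct. The first direction is the paper's Step~1. You also make explicit a fact the paper uses silently: $h_{Q_\lambda}=((1-\lambda)h_K^p+\lambda h_L^p)^{1/p}$ holds $\tilde C_{q,Q_\lambda}$-almost everywhere. You should drop the stray factor $\frac1n$ in your first display, since $\tilde C_{q,Q}(\sn)=\tilde V_q(Q)$.

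The second direction is where you depart from the paper.

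\begin{itemize}
\item The paper first proves that $f(\lambda)=\tilde V_q(Q_\lambda)^{p/q}$ is concave on $[0,1]$. It uses the Wulff-shape inclusion $(1-\lambda)K_\sigma+_p\lambda K_\tau\subseteq Q_{(1-\lambda)\sigma+\lambda\tau}$ together with \eqref{E:15} for the intermediate pairs $(K_\sigma,K_\tau)$. Only then does it read off $f'(0)\ge f(1)-f(0)$.
\item You observe that the endpoint chord inequality $f(t)\ge(1-t)f(0)+tf(1)$ suffices. This is just \eqref{E:15} for the pair $(K,L)$ itself. It gives $(f(t)-f(0))/t\ge f(1)-f(0)$, hence $f'(0)\ge f(1)-f(0)$, once the variational formula of \cite{HLYZ16}, applied to the two-sided extension of $h_t$, makes $f$ differentiable at $0$.
\end{itemize}

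So the paper's remark that \eqref{E:100} ``is not enough'' is overly cautious.

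Your route is shorter and gives a pairwise implication: \eqref{E:15} for $(K,L)$ implies \eqref{E:2} for $(K,L)$. This matters when the inequalities are only known on restricted classes, such as bodies near the ball. The paper's route gives the stronger fact that $f$ is concave on all of $[0,1]$, but the lemma does not need it. Note that the other direction is pairwise in neither argument: both apply \eqref{E:2} with $Q_\lambda$, not $K$, as the reference body.
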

	\begin{proof}
		Suppose $K,L\in\kno$. For $0\leq\lambda\leq1$, let
		\begin{equation*}
			Q_{\lambda}=(1-\lambda)K+_{p}\lambda L,
		\end{equation*}
		i.e., $Q_{\lambda}$ is the Wulff shape associated with the function $q_\lambda=((1-\lambda)h_K^p+\lambda h_L^p)^{\frac{1}{p}}$. It will be convenient to consider $h_{Q_{\lambda}}$ as being defined for $\lambda$ in the open interval $(-\varepsilon_0, 1+\varepsilon_0)$, where $\varepsilon_0>0$ is chosen so that for $\lambda\in(-\varepsilon_0, 1+\varepsilon_0)$, the function $q_\lambda$ is strictly positive.
		
		\textbf{Step 1: \eqref{E:2}$\Rightarrow$\eqref{E:15}.} 
		
		Assume that the $L_p$ Minkowski inequality \eqref{E:2} holds. Then
		\begin{align*}
			\tilde{V}_q(Q_{\lambda})
			=&\int_{\mathbb{S}^{n-1}}1d\tilde{C}_{q,Q_\lambda}\\
			=&\int_{\sn}h^{p}_{Q_\lambda}h^{-p}_{Q_\lambda}d\tilde{C}_{q,Q_\lambda}\\
			=&\int_{\mathbb{S}^{n-1}}((1-\lambda)h_K^p+\lambda h_L^p)h_{Q_{\lambda}}^{-p}d\tilde{C}_{q,Q_\lambda}\\
			=&(1-\lambda)\int_{\sn}\left(\frac{h_K}{h_{Q_\lambda}}\right)^pd\tilde{C}_{q,Q_\lambda}+\lambda\int_{\sn}\left(\frac{h_L}{h_{Q_\lambda}}\right)^pd\tilde{C}_{q,Q_\lambda}\\
			\geq&(1-\lambda)\tilde{V}_q(Q_{\lambda})\left(\frac{\tilde{V}_q(K)}{\tilde{V}_q(Q_{\lambda})}\right)^{\frac{p}{q}}+\lambda\tilde{V}_q(Q_{\lambda})\left(\frac{\tilde{V}_q(L)}{\tilde{V}_q(Q_{\lambda})}\right)^{\frac{p}{q}}\\
			=&\tilde{V}_q(Q_{\lambda})^{1-\frac{p}{q}}[(1-\lambda)\tilde{V}_q(K)^{\frac{p}{q}}+\lambda\tilde{V}_q(L)^{\frac{p}{q}}].
		\end{align*}
		Hence
		\begin{equation}\label{E:100}
			\tilde{V}_q(Q_{\lambda})\geq[(1-\lambda)\tilde{V}_q(K)^{\frac{p}{q}}+\lambda \tilde{V}_q(L)^{\frac{p}{q}}]^{\frac{q}{p}},
		\end{equation}
		which is actually \eqref{E:15}.
		
		\textbf{Step 2: \eqref{E:15}$\Rightarrow$\eqref{E:2}.}
		
		Define the function $f: [0,1]\rightarrow(0,\infty)$, given by $f(\lambda)=\tilde{V}_q(Q_{\lambda})^{\frac{p}{q}}$. \eqref{E:100} shows that $f(\lambda)\geq(1-\lambda)f(0)+\lambda f(1)$, which is not enough, because we want to verify that $f$ is a concave function, i.e., $f((1-\lambda)\sigma+\lambda \tau)\geq(1-\lambda)f(\sigma)+\lambda f(\tau)$ for $\sigma,\tau\in[0,1]$.
		When $p\geq1$, $f(\lambda)$ is concave. However, when $p<1$, it is not obvious, so a property of Wulff shapes is needed.
		For given $\sigma, \tau\in[0,1]$, let
		\begin{equation*}
			K_{\sigma}=(1-\sigma)\cdot K+_{p}\sigma\cdot L,\quad K_{\tau}=(1-\tau)\cdot K+_{p}\tau\cdot L.
		\end{equation*}
		Since $K_{\sigma}$ is the Wulff shape of the function $((1-\sigma)h_K^p+\sigma h_L^p)^{\frac{1}{p}}$, we have
		\begin{equation*}
			h_{K_{\sigma}}\leq((1-\sigma)h_K^p+\sigma h_L^p)^{\frac{1}{p}}.
		\end{equation*}
		If $\lambda\in[0,1]$ and $\alpha=(1-\lambda)\sigma+\lambda \tau$, this gives
		\begin{align*}
			(1-\lambda)h_{K_{\sigma}}^p+\lambda h_{K_{\tau}}^p\leq&(1-\lambda)[(1-\sigma)h_K^p+\sigma h_L^p]+\lambda[(1-\tau)h_K^p+\tau h_L^p]\\
			=&[(1-\lambda)(1-\sigma)+\lambda(1-\tau)]h_K^p+[(1-\lambda)\sigma+\lambda\tau]h_L^p\\
			=&(1-\alpha)h_K^p+\alpha h_L^p.
		\end{align*}
		Thus, $[(1-\lambda)h_{K_{\sigma}}^p+\lambda h_{K_{\tau}}^p]^{\frac{1}{p}}\leq[(1-\alpha)h_K^p+\alpha h_L^p]^{\frac{1}{p}}$, and taking the Wulff shapes of these functions allows us to conclude that
		\begin{equation*}
			(1-\lambda)K_{\sigma}+_{p}\lambda K_{\tau}\subseteq(1-\alpha)K+_{p}\alpha L.
		\end{equation*}
		This gives
		\begin{align*}
			f((1-\lambda)\sigma+\lambda \tau)=&\tilde{V}_q((1-\alpha)\cdot K+_{p}\alpha\cdot L)^{\frac{p}{q}}\\
			\geq&\tilde{V}_q((1-\lambda)\cdot K_{\sigma}+_{p}\lambda\cdot K_{\tau})^{\frac{p}{q}}\\
			\geq&(1-\lambda)\tilde{V}_q(K_{\sigma})^{\frac{p}{q}}+\lambda \tilde{V}_q(K_{\tau})^{\frac{p}{q}}\\
			=&(1-\lambda)f(\sigma)+\lambda f(\tau),
		\end{align*}
		which is the desired concavity of $f$, i.e., $f(1)-f(0)\leq f^{'}(0)$.
		
		From $f(\lambda)=\tilde{V}_q(Q_{\lambda})^{\frac{p}{q}}$, we have
		\begin{equation*}
			f^{'}(\lambda)|_{\lambda=0}=\frac{p}{q}\tilde{V}_q(Q_{\lambda})^{\frac{p}{q}-1}\tilde{V}_q(Q_{\lambda})^{'}|_{\lambda=0}.
		\end{equation*}
		Since $Q_{\lambda}$ is the Wulff shape of the function $q_{\lambda}^p=((1-\lambda)h_K^p+\lambda h_L^p)$, and
		\begin{equation*}
			q_{\lambda}^p=h_K^p+\lambda(h_L^p-h_K^p)=h_K^p\left[1+\lambda \left(\frac{h_L^p-h_K^p}{h_K^p}\right)\right],
		\end{equation*}
		it follows immediately that, for $\lambda\in[0,1]$,
		\begin{equation*}
			\log q_\lambda=\log h_K+\frac{1}{p}\log \left[1+\lambda \left(\frac{h_L^p-h_K^p}{h_K^p}\right)\right]=\log h_K+\frac{\lambda}{p}\left[\left(\frac{h_L}{h_K}\right)^p-1\right]+o(\lambda).
		\end{equation*}
		Therefore, combining the perturbation $g=\frac{1}{p}\left[\left(\frac{h_L}{h_K}\right)^p-1\right]$ with the variational formulation in \cite{HLYZ16}, we have
		\begin{align*}
			\frac{d}{d\lambda}\tilde{V}_q(Q_{\lambda})|_{\lambda=0}=&q\int_{\mathbb{S}^{n-1}}g(u)d\tilde{C}_{q,K}(u)\\
			=&\frac{q}{p}\int_{\mathbb{S}^{n-1}}\left[\left(\frac{h_L}{h_K}\right)^p-1\right]d\tilde{C}_{q,K}(u)\\
			=&\frac{q}{p}\int_{\mathbb{S}^{n-1}}\left(\frac{h_L}{h_K}\right)^pd\tilde{C}_{q,K}(u)-\frac{q}{p}\tilde{V}_q(K).
		\end{align*}
		Therefore, the concavity of $f$ yields
		\begin{align*}
			\tilde{V}_q(K)^{\frac{p-q}{q}}\left[\int_{\mathbb{S}^{n-1}}\left(\frac{h_L}{h_K}\right)^pd\tilde{C}_q(K,u)-\tilde{V}_q(K)\right]&=f^{'}(0)\\
			&\geq f(1)-f(0)=\tilde{V}_q(L)^{\frac{p}{q}}-\tilde{V}_q(K)^{\frac{p}{q}},
		\end{align*}
		which gives the $L_p$ dual Minkowski inequality.
	\end{proof}
	
	\begin{proof}[Proof of the second item in Theorem \ref{lp BMI no near ball}]
		Combining Theorem \ref{thm 10} and Lemma \ref{lem1}, we complete the proof. 
	\end{proof}
	
	\section*{Acknowledgement}
	The authors express sincere gratitude to professor Yong Huang for his constant guidance. The authors also thanks Dr. Jinrong Hu for helpful comments and suggestions.
	
	\section*{conflict of interest} On behalf of all authors, the corresponding author states that there is no conflict of interest.
	
	\section*{Data availability} There is no data used in this manuscript.
	
\end{document}